\title{\vspace{-1.0cm}Cameron-Liebler line classes in AG($3,q$)}
\begin{document}

\theoremstyle{plain}
\newtheorem{St}{Theorem}[section]
\newtheorem{Le}[St]{Lemma}
\newtheorem{Gev}[St]{Corollary}
\newtheorem{Cons}[St]{Construction}
\theoremstyle{definition}
\newtheorem{Ex}[St]{Example}
\newtheorem{Def}[St]{Definition}
\newtheorem{Opm}[St]{Remark}

\newcommand{\gauss}[2]{\genfrac{[}{]}{0pt}{}{#1}{#2}}
\renewcommand{\thefootnote}{\fnsymbol{footnote}}

 \footnotetext{$*$ Department of Mathematics: Analysis, Logic and Discrete Mathematics, Ghent University, Krijgslaan 281, Building S8, 9000 Gent, Flanders, Belgium \\(Email: jozefien.dhaeseleer@ugent.be, leo.storme@ugent.be) (www: http://cage.ugent.be/$\sim$jmdhaese/, http://cage.ugent.be/$\sim$ls)\label{Gent}.}%
 \footnotetext{$\dagger$ Department of Mathematics and Data Science, University of Brussels (VUB),  Pleinlaan 2, Building G, 1050 Elsene, Brussels, Belgium \\ (Email: Jonathan.Mannaert@vub.be) (http://homepages.vub.ac.be/$\sim$jonmanna/)\label{VUB}.}%
\footnotetext{$\ddagger$ Department of Mathematics, University of Rijeka, Radmile Matej\v{c}i\'{c} 2, 51000 Rijeka, Croatia  \\(Email: asvob@math.uniri.hr)\label{Croatia}.}%

\author{J. D'haeseleer\footnotemark\label{Gent}, J. Mannaert\footnotemark{\label{VUB}}, L. Storme$^*$ and A. \v Svob\footnotemark\label{Croatia}}

\maketitle

\begin{abstract}
		The study of Cameron-Liebler line classes in PG($3,q$) arose from classifying specific collineation subgroups of PG($3,q$). Recently, these line classes were considered in new settings. In this point of view, we will generalize the concept of Cameron-Liebler line classes to AG($3,q$). In this article we define Cameron-Liebler line classes using the constant intersection property towards line spreads.  The interesting fact about this generalization is the link these line classes have with Cameron-Liebler line classes in PG($3,q$). Next to giving this link, we will also give some equivalent ways to consider Cameron-Liebler line classes in AG($3,q$), some classification results and an example based on the example found in \cite{DeBeule} and \cite{Feng}.
		
\end{abstract}	
	
	\section{Introduction}
	Let $p$ be a prime and $q=p^h$, with $h\geq 1$. Then we can consider PG($3,q$) and the corresponding affine space AG($3,q$), with $\pi_\infty$ the hyperplane at infinity, as the $3$-dimensional projective and affine space over $\mathbb{F}_q$.
The fact that these spaces are linked, will lead to interesting connections in the study of Cameron-Liebler line classes. We start with the definition of some line sets in both AG($3,q$) and PG($3,q$).
\begin{Def}\label{DefSpread}
Consider PG($3,q$) or AG($3,q$).
\begin{enumerate}
\item A set of pairwise disjoint lines is called a \emph{partial line spread}.
\item A pair of \emph{conjugated switching sets} consists of two disjoint partial line spreads that cover the same set of points.
\item A \emph{line spread} is a partial line spread that partitions the points of the corresponding space. The size of such line spreads is fixed. In PG($3,q$), a line spread has size $q^2+1$ and in AG($3,q$) it has size $q^2$.
\end{enumerate}
\end{Def}

	
	In the following lemma we describe some examples of line spreads in AG($3,q$).
	\newpage

	\begin{Le}\label{SpreadsAG(3,q)}
			Consider the affine space AG($3,q$) and the corresponding projective space PG($3,q$). Then the following sets $\mathcal{S}$ are line spreads in AG($3,q$).
			
			\begin{enumerate}
				\item (\emph{Type I}) Let $\mathcal{S}$ be a line spread in PG($3,q$), then the restriction of lines of $\mathcal{S}$ in AG($3,q$) is  a line spread in this space. This restriction of $\mathcal{S}$ to AG($3,q$) consists of all the lines of $\mathcal{S}$ except the unique line at infinity in $\mathcal{S}$.
				\item (\emph{Type II}) Consider a point \(p\) in \(\pi_\infty\) and define	the set \(\mathcal{S}\) as the set of all affine lines through $p$.	
			 \end{enumerate}
	\end{Le}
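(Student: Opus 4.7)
The plan is to verify, for each type, the three conditions defining a line spread of AG($3,q$): pairwise disjointness, coverage of every affine point, and (as a sanity check) the expected cardinality $q^2$.

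For \emph{Type I}, the key preliminary step is to show that any line spread $\mathcal{S}$ of PG($3,q$) contains exactly one line of $\pi_\infty$. Indeed, since lines of $\mathcal{S}$ are pairwise disjoint and any two lines of the projective plane $\pi_\infty$ meet, at most one line of $\mathcal{S}$ lies in $\pi_\infty$. A counting argument then forces this number to be exactly one: the $q^2+q+1$ points of $\pi_\infty$ must each lie on exactly one spread line, and if no spread line were contained in $\pi_\infty$ then the $q^2+1$ remaining spread lines could each cover only one point of $\pi_\infty$, giving a shortfall of $q$ points. Call this unique line $\ell_\infty \in \mathcal{S}$. Removing $\ell_\infty$ from $\mathcal{S}$ and then intersecting each remaining line with AG($3,q)$ gives $q^2$ sets, each being a projective line minus its unique point at infinity, so each is an affine line. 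Pairwise disjointness in AG is inherited from $\mathcal{S}$, and the partition of $\textup{PG}(3,q)\setminus \pi_\infty$ follows from the partition of PG($3,q$) by $\mathcal{S}$ together with the fact that all removed points lie in $\pi_\infty$.

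For \emph{Type II}, I would work again inside PG($3,q$) and then restrict. Through the point $p \in \pi_\infty$ there pass $\frac{q^3-1}{q-1} = q^2+q+1$ projective lines, of which exactly $q+1$ lie in $\pi_\infty$ (the lines of $\pi_\infty$ through $p$). Subtracting shows there are precisely $q^2$ projective lines through $p$ not contained in $\pi_\infty$, which after removing the point $p$ give the $q^2$ affine lines of $\mathcal{S}$. Two such lines meet only at $p$ in PG($3,q$), hence are disjoint in AG($3,q$). Finally, every affine point $a$ lies on the unique projective line $ap$, which is not contained in $\pi_\infty$ (since $a \notin \pi_\infty$), so $a$ is covered by a unique element of $\mathcal{S}$.

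The argument is largely bookkeeping, and the only non-trivial step I anticipate is the existence-and-uniqueness of the line at infinity in a spread of PG($3,q$) used in Type I; everything else reduces to two standard counts (lines through a point, points on a plane) plus the observation that two coplanar projective lines always meet.
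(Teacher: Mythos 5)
Your verification is correct and complete: the counting argument showing a spread of PG($3,q$) contains exactly one line of $\pi_\infty$, and the two standard counts for Type II, are exactly what is needed. The paper itself offers no argument at all here (its proof reads only ``These examples are indeed line spreads''), so you have simply supplied the routine details the authors chose to omit.
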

 \begin{proof}
 	These examples are indeed line spreads in AG($3,q$).

\end{proof}
\begin{Opm}
It is possible to define a generalization of line spreads of type II. Consider AG($3,q$) and let \(\ell\) be a line in the plane at infinity \(\pi_\infty\). Then there are exactly \(q\) planes through $\ell$ not equal to \(\pi_\infty\). If we call these planes \(\pi_i\), for \(i \in \{1,...,q\}\), then we can choose for every plane \(\pi_i\) a point \(p_i\) on $\ell$. This defines the following line spread
				 \[\mathcal{S}:=\{\ell \mid  \ell \text{ a line, such that  } p_i\in \ell \subseteq \pi_i \text{ for some } i \in \{1,...,q\}\}.\]	
It is clear that if we would choose all points $p_i$ equal, we obtain a line spread of type II.
\end{Opm}

\section{Preliminary results}

In this section, we will give some useful results that will be used later on. We will give these results as general as possible so that they can be used in other contexts. First of all we choose $n>1$ and $1\leq k\leq n-1$. Here we consider the affine space AG($n,q$) and the corresponding projective space PG($n,q$). In these spaces, we define $\Pi_k$, and $\Phi_k$, as the set of $k$-spaces in PG($n,q$), and AG($n,q$), respectively. 

\begin{Cons}[Incidence matrix]\label{IncidenceMatrix}

Consider the incidence matrix \(P_n\) of PG($n,q$), defined over the field $\mathbb{C}$, where the rows correspond to the points and the columns correspond to the elements of \(\Pi_k\). We order the rows and columns in such a way that the first rows and columns correspond to the affine points and $k$-spaces respectively.  Then \(P_n\) is of the following form:
\begin{equation}\label{IncidenceMatrixPn}
P_n= \begin{bmatrix}
A & \bar{0} \\
B_2 & P_{n-1}
\end{bmatrix}.
\end{equation}
Here \(A\) is the incidence matrix of AG($n,q$), where again the rows  correspond to the points and the columns correspond to the elements of \(\Phi_k\). The matrix $\bar{0}$ is the zero-matrix and the part that remains unnamed, we call \(B_2\).
\end{Cons}
Before we state an important result, we will give a lemma that will be useful.
\begin{Le}(\cite[Theorem 9.5]{DeBruyn})\label{FullRankIncidence}
The point-($k$-space) incidence matrix of PG($n,q$) or AG($n,q$) has full rank.
\end{Le}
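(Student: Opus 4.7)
The plan is to prove full rank by analyzing the Gram matrix $MM^T$, where $M$ denotes the point-$k$-space incidence matrix. Since in both PG($n,q$) and AG($n,q$) the number of $k$-spaces is at least as large as the number of points (for $1 \leq k \leq n-1$), showing full rank amounts to showing $M$ has full row rank, equivalently that the square matrix $MM^T$ is nonsingular.

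First I would compute the entries of $MM^T$. The $(p,p')$-entry equals the number of $k$-spaces containing both points $p$ and $p'$. By homogeneity of PG($n,q$) and AG($n,q$), this number depends only on whether $p = p'$ or not. Call these two constants $\lambda_0$ (the number of $k$-spaces through a fixed point) and $\lambda_1$ (the number of $k$-spaces through two distinct fixed points). Hence
\[
MM^T = (\lambda_0 - \lambda_1)\,I + \lambda_1\,J,
\]
where $I$ is the identity and $J$ is the all-ones matrix of appropriate size.

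Next I would diagonalise this matrix. Its eigenvalues are $\lambda_0 + (v-1)\lambda_1$, with eigenvector the all-ones vector, and $\lambda_0 - \lambda_1$ with multiplicity $v-1$, where $v$ is the number of points. Both are strictly positive as soon as $\lambda_0 > \lambda_1$. Using Gaussian binomial counts, one verifies that in PG($n,q$), $\lambda_0 = \gauss{n}{k}_q$ and $\lambda_1 = \gauss{n-1}{k-1}_q$, while in AG($n,q$) the analogous counts also satisfy $\lambda_0 > \lambda_1$ for $1 \leq k \leq n-1$. Therefore $MM^T$ is positive definite, hence nonsingular, and $M$ has full row rank.

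The main obstacle is mostly bookkeeping: making sure the counts $\lambda_0, \lambda_1$ are handled correctly in both the projective and the affine setting and checking the strict inequality $\lambda_0 > \lambda_1$ holds in the full range $1 \leq k \leq n-1$. Once this is verified, the argument is uniform: the Gram matrix $MM^T$ lies in the commutative algebra spanned by $I$ and $J$, so its spectrum is transparent, and nonsingularity of $MM^T$ immediately yields the full-rank conclusion.
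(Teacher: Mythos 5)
Your proof is correct. The paper offers no proof of its own for this lemma---it simply cites \cite[Theorem 9.5]{DeBruyn}---and your Gram-matrix argument (computing $MM^T=(\lambda_0-\lambda_1)I+\lambda_1 J$ from the $2$-transitivity of the collineation group on points, checking $\lambda_0>\lambda_1$, and reading off the positive spectrum) is exactly the classical Fisher-type proof that underlies the cited result, so there is nothing to add beyond confirming that the two constants are $\gauss{n}{k}_q$ and $\gauss{n-1}{k-1}_q$ in both the projective and affine cases.
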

Using this lemma, we give the following important result.
\begin{St}\label{chiUpToDown}
	Consider a set \(\mathcal{L}\) of $k$-spaces in PG($n,q$) and let $P_n$ and $A$ be as in Construction \ref{IncidenceMatrix}. If the characteristic vector \(\widetilde{\chi}_{\mathcal{L} }$ lies in $ (\ker(P_n))^\perp \) and if \(\mathcal{L}\)  contains no $k$-spaces at infinity, then  \(\widetilde{\chi}_{\mathcal{L} } \) restricted to AG($n,q$) belongs to \( (\ker(A))^\perp\).
\end{St}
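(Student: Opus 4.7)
The plan is to reformulate the hypothesis as a membership in the row span of $P_n$ and then exploit the block-triangular form of $P_n$ from Construction \ref{IncidenceMatrix}. Since $(\ker P_n)^\perp$ coincides with the row span of $P_n$, we can write $\alpha P_n = \widetilde{\chi}_{\mathcal{L}}^T$ for some row vector $\alpha$ indexed by the points of PG($n,q$). I would partition $\alpha = (u,w)$ according to whether the underlying point is affine or lies in $\pi_\infty$.

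Next I would partition the right-hand side $\widetilde{\chi}_{\mathcal{L}}^T$ in the corresponding way. Because $\mathcal{L}$ contains no $k$-spaces at infinity, the coordinates of $\widetilde{\chi}_{\mathcal{L}}$ on those $k$-spaces are zero, so $\widetilde{\chi}_{\mathcal{L}}^T = (\chi^T,\,0^T)$, where $\chi$ is exactly the restriction of $\widetilde{\chi}_{\mathcal{L}}$ to $\Phi_k$, i.e.\ the vector whose membership in $(\ker A)^\perp$ we aim to establish. Expanding $\alpha P_n$ by means of the block decomposition yields the pair of equations
\[
uA + wB_2 = \chi^T, \qquad w\,P_{n-1} = 0^T.
\]

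The key step is then to eliminate $w$ using Lemma \ref{FullRankIncidence} applied to $P_{n-1}$. Since the number of points of PG($n-1,q$) never exceeds the number of its $k$-spaces, ``full rank'' translates into linear independence of the rows of $P_{n-1}$, so $w\,P_{n-1}=0^T$ forces $w=0$. The first equation then collapses to $uA = \chi^T$, which exhibits $\chi$ as a $\mathbb{C}$-linear combination of rows of $A$; that is, $\chi$ lies in the row span of $A$, which is precisely $(\ker A)^\perp$. I expect the only mildly delicate point to be the interpretation of Lemma \ref{FullRankIncidence}, namely extracting from its ``full rank'' wording the precise row-independence of $P_{n-1}$; once that reading is in place, the remaining argument is routine block-matrix algebra.
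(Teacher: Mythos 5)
Your proof is correct, but it is the ``transposed'' version of the argument the paper gives. The paper works with kernels: it takes an arbitrary $v_1\in\ker(A)$ and extends it to a vector $v=\begin{pmatrix}v_1\\ v_2\end{pmatrix}\in\ker(P_n)$, which is possible because the second block row requires solving $P_{n-1}v_2=-B_2v_1$ and Lemma \ref{FullRankIncidence} guarantees $P_{n-1}$ is surjective; orthogonality of $\widetilde{\chi}_{\mathcal{L}}$ to $v$ then collapses to $\chi_{\mathcal{L}}\cdot v_1=0$. You instead work with the row space: you write $\widetilde{\chi}_{\mathcal{L}}^T=\alpha P_n$, split $\alpha=(u,w)$, and use the vanishing of the infinity block of $\widetilde{\chi}_{\mathcal{L}}$ to get $wP_{n-1}=0$, whence $w=0$ and $uA=\chi^T$. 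The two arguments extract exactly the same content from Lemma \ref{FullRankIncidence} --- namely that the rank of $P_{n-1}$ equals its number of rows (surjectivity of $P_{n-1}$ for the paper, triviality of its left null space for you), so your ``mildly delicate point'' is precisely the point the paper also relies on, and neither proof is on firmer ground than the other there (your justification via counting points versus $k$-spaces of PG($n-1,q$) is fine for $k\le n-2$, which covers the case $n=3$, $k=1$ actually used). One small bonus of your route is that it yields the slightly stronger conclusion $w=0$: the functional representing $\widetilde{\chi}_{\mathcal{L}}$ can be taken supported on the affine points, which directly exhibits $\chi_{\mathcal{L}}\in\mathrm{Im}(A^T)$ rather than merely $\chi_{\mathcal{L}}\in(\ker(A))^\perp$ (of course these coincide, as the paper notes elsewhere).
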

\begin{proof}
	We consider the characteristic vector $\widetilde{\chi}_{\mathcal{L} }$ of the set \(\mathcal{L}\). Since this set misses the hyperplane at infinity, it follows from our chosen ordering in Construction \ref{IncidenceMatrix} that $\widetilde{\chi}_{\mathcal{L} }$ is of the form:
	\[\widetilde{\chi}_{\mathcal{L} }=\begin{pmatrix}
	\chi_{\mathcal{L}} \\
	\bar{0}
	\end{pmatrix} \in (\ker(P_{n}))^\perp.\]
	Consider now a vector \(v_1 \in \ker(A)\), then we need to prove that \(\chi_{\mathcal{L}}\cdot v_1=0\). We claim that we are able to find a vector \(v_2\), such that \(v= \begin{pmatrix}
	v_1 \\
	v_2
	\end{pmatrix} \in \ker(P_{n})\). For this vector \(v\) we find that
	\[0=\widetilde{\chi}_{\mathcal{L} }\cdot v= \chi_{\mathcal{L}}\cdot v_1,\]
	and since \(v_1\) was arbitrarily chosen, the proof is done. But we still need to prove our claim. So for \(v_1\) as above, we need to find a vector \(v_2\) such that
	\begin{equation}\label{CUDEq}
	\left\{
	\begin{array}{ll}
	Av_1+\bar{0}v_2 & =0   \\
	B_2v_1+P_{n-1}v_2 &=0 
	\end{array} \right. ,
	\end{equation}
holds. Notice that the left-hand side of line 1 in Equation (\ref{CUDEq}) is always zero since \(v_1 \in \ker(A)\). 
Due to Lemma \ref{FullRankIncidence}, the matrix \(P_{n-1}\) has full rank. Hereby, we can always find a vector \(v_2\) that makes the left-hand side  of line 2 in Equation (\ref{CUDEq}) zero.
\end{proof}

A last result that we want to state is the following. For this result we need to define $k$-spreads in AG($n,q$).  A $k$-spread in AG($n,q$) is a set of skew $k$-spaces that partitions the point set of AG($n,q$).
\begin{Le}\label{2to8}
	Consider a set \(\mathcal{L}\) of $k$-spaces in AG($n,q$), such that \(\chi_{\mathcal{L}} \in (\ker(A))^\perp\) with $A$ equal to the point-($k$-space) incidence matrix of AG($n,q$). Then it follows that for every affine $k$-spread \(\mathcal{S}\), it holds that
	\[|\mathcal{L}\cap\mathcal{S}|=x,\]
	for a certain fixed integer \(x\).
\end{Le}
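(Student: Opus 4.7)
The plan is to pass from the statement about intersection sizes to an orthogonality relation between characteristic vectors, and then use the defining property of a spread to produce a kernel element of $A$.

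First, I would observe the key identity: if $\mathcal{S}$ is any affine $k$-spread of AG$(n,q)$, then by definition every point of AG$(n,q)$ lies on exactly one element of $\mathcal{S}$, so $A \chi_{\mathcal{S}} = \mathbf{1}$, the all-ones column vector indexed by the affine points. Consequently, if $\mathcal{S}$ and $\mathcal{S}'$ are two affine $k$-spreads, then
\[
A(\chi_{\mathcal{S}} - \chi_{\mathcal{S}'}) = \mathbf{1} - \mathbf{1} = \bar{0},
\]
so $\chi_{\mathcal{S}} - \chi_{\mathcal{S}'} \in \ker(A)$.

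Next, I would combine this with the hypothesis $\chi_{\mathcal{L}} \in (\ker(A))^\perp$ to obtain
\[
0 = \chi_{\mathcal{L}} \cdot (\chi_{\mathcal{S}} - \chi_{\mathcal{S}'}) = \chi_{\mathcal{L}}\cdot \chi_{\mathcal{S}} - \chi_{\mathcal{L}}\cdot \chi_{\mathcal{S}'} = |\mathcal{L}\cap\mathcal{S}| - |\mathcal{L}\cap\mathcal{S}'|,
\]
since the standard inner product of two $0/1$-vectors equals the cardinality of the intersection of the corresponding sets. Hence $|\mathcal{L}\cap\mathcal{S}|$ does not depend on the choice of affine $k$-spread $\mathcal{S}$, which is precisely the claim, with $x$ defined as this common value.

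There is no real obstacle here; the argument is a one-line linear-algebra trick once one notices that two spreads differ by an element of $\ker(A)$. The only thing worth double-checking is that at least one affine $k$-spread exists (otherwise the statement is vacuous), but the Type~I and Type~II constructions in Lemma~\ref{SpreadsAG(3,q)} already exhibit spreads in the case $n=3$, $k=1$, and analogous Desarguesian constructions work in the general $(n,k)$ setting whenever $(k+1)\mid (n+1)$, which is the relevant case for the applications in the paper.
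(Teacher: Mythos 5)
Your argument is correct and rests on the same key observation as the paper's proof, namely that $A\chi_{\mathcal{S}}$ is the all-ones point vector for any affine $k$-spread $\mathcal{S}$. The only difference is in packaging: you compare two spreads and deduce $\chi_{\mathcal{S}}-\chi_{\mathcal{S}'}\in\ker(A)$, which shows the intersection number is constant but leaves $x$ implicit, whereas the paper compares a single spread against the scaled all-ones vector, showing $\chi_{\mathcal{S}}-\gauss{n}{k}_q^{-1}\mathbf{j}\in\ker(A)$ (with $\gauss{n}{k}_q$ the number of affine $k$-spaces through a point), which yields the explicit value $x=\gauss{n}{k}_q^{-1}|\mathcal{L}|$ in one step and makes the integrality of $x$ immediate. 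Both are one-line linear-algebra arguments and either is acceptable. One small correction to your closing aside: non-vacuity is not an issue and needs no divisibility hypothesis, since in AG($n,q$) the parallel class of any $k$-flat (the cosets of a $k$-dimensional subspace) is already an affine $k$-spread for every $1\leq k\leq n-1$; the condition $(k+1)\mid(n+1)$ is relevant only for spreads of the projective space.
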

\begin{proof}
	Let  \(\chi_\mathcal{S}\) be the characteristic vector of the $k$-spread $\mathcal{S}$, then
	\begin{equation} \label{2.4}\chi_{\mathcal{S}}- \gauss{n}{k}_q^{-1}\textbf{j} \in \ker(A).\end{equation}
Here $\gauss{n}{k}_q$ equals the number of $k$-spaces through a point in AG($n,q$) and $\textbf{j}$ is the vector of the correct dimension that contains $1$ on every position.
	 Equation (\ref{2.4}) is valid since every point is contained in exactly one element of \(\mathcal{S}\), so every row of \(A\) and \(\chi_{\mathcal{S}}\) have exactly one 1 in common. The assertion then follows since from \(\chi_{\mathcal{L}}\in (\ker(A))^\perp\), we know that
	\[\chi_{\mathcal{L}}\cdot \left( \chi_{\mathcal{S}}- \gauss{n}{k}_q^{-1}\textbf{j}\right) =0,\]
	or simplified 
	\[|\mathcal{L}\cap\mathcal{S}|=\gauss{n}{k}_q^{-1}|\mathcal{L}|=x.\]
	Here $x$ is indeed an integer.
\end{proof}

\section{Cameron-Liebler line classes}
		
	Cameron-Liebler line classes were first observed in \cite{Cameron-Liebler}. In this article, Cameron and Liebler tried to classify the collineation subgroups of PG($3,q$) that have the same number of orbits on the lines as on the points. They noticed that these orbits on the lines have special properties. Line classes that satisfy these properties were later called Cameron-Liebler line classes. We will give the definition of a Cameron-Liebler line class for PG($3,q$) as well as for AG($3,q$).

\begin{Def}\label{CLLineClass}
A \emph{Cameron-Liebler line class}, or briefly CL line class, of parameter $x$ in both PG($3,q$) or AG($3,q$) is a set of lines, such that for every line spread $\mathcal{S}$ it holds that $|\mathcal{L} \cap \mathcal{S}|=x$.
\end{Def}

If we now recall Lemma \ref{SpreadsAG(3,q)}, we obtain that every line spread of PG($3,q$) without its line at infinity is a line spread of AG($3,q$). Hence, since CL line classes in AG($3,q$) do not have lines at infinity, we find that CL line classes in AG($3,q$) are in fact special types of CL line classes in PG($3,q$). We thus have the following theorem.

\begin{St}\label{AffCLBasic}
	If  \(\mathcal{L}\) is a CL line class of AG($3,q$), then \(\mathcal{L}\) is a CL line class in the corresponding projective space PG($3,q$) with the same parameter $x$.
\end{St}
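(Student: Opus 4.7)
The plan is to show that any line spread of PG($3,q$) can be converted, in a canonical way, into a line spread of AG($3,q$) by deleting a single line at infinity, and then to invoke the defining property of $\mathcal{L}$ in the affine setting. The theorem is essentially a bookkeeping statement once one has the correspondence between projective and affine line spreads coming from Lemma \ref{SpreadsAG(3,q)}.

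Concretely, I would start from an arbitrary line spread $\mathcal{S}'$ of PG($3,q$). By Lemma \ref{SpreadsAG(3,q)}(Type I), $\mathcal{S}'$ contains exactly one line $\ell_\infty$ of the plane at infinity $\pi_\infty$, and the set $\mathcal{S} := \mathcal{S}' \setminus \{\ell_\infty\}$ is a line spread of AG($3,q$). Because $\mathcal{L}$ is by hypothesis a set of affine lines, no element of $\mathcal{L}$ lies in $\pi_\infty$; in particular $\ell_\infty \notin \mathcal{L}$, so
\[
|\mathcal{L} \cap \mathcal{S}'| \;=\; |\mathcal{L} \cap \mathcal{S}|.
\]
Applying the affine CL hypothesis to the spread $\mathcal{S}$ gives $|\mathcal{L} \cap \mathcal{S}| = x$, and hence $|\mathcal{L} \cap \mathcal{S}'| = x$. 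Since $\mathcal{S}'$ was an arbitrary line spread of PG($3,q$), this is exactly the statement that $\mathcal{L}$ is a CL line class of PG($3,q$) with parameter $x$.

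I do not anticipate a genuine obstacle. The only nontrivial ingredient beyond unpacking the definitions is the fact that every line spread of PG($3,q$) contains exactly one line of $\pi_\infty$, and this is already built into Lemma \ref{SpreadsAG(3,q)}. It is perhaps worth flagging in the write-up that the converse direction does \emph{not} follow from the same argument, because an affine CL line class only needs to behave correctly on affine spreads, whereas the projective notion also tests against Type II-like spreads whose completions to PG($3,q$) need not arise from extending an affine spread by a single infinity line; in other words, Theorem \ref{AffCLBasic} identifies the affine CL line classes with a proper subclass of the projective ones.
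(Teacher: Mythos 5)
Your argument is correct and is exactly the paper's proof: every projective line spread contains a unique line of $\pi_\infty$, deleting it yields an affine (Type~I) spread meeting $\mathcal{L}$ in the same number of lines, and the affine hypothesis then forces that number to be $x$. One small quibble with your closing aside: it is the \emph{affine} definition, via the Type~II spreads (which do not extend to projective spreads), that imposes the extra tests, not the projective one --- but this does not affect the proof itself.
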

\begin{proof}
Follows from Lemma \ref{SpreadsAG(3,q)} (1).
\end{proof}

This will lead to a translation of the known results for CL line classes in PG($3,q$) to CL line classes in AG($3,q$). Another consequence of  Lemma \ref{SpreadsAG(3,q)} is that AG($3,q$) has significantly more line spreads than PG($3,q$), which will result in stronger existence conditions. This will rule out the parameter  $x=2$, which we will prove in Corollary \ref{SmallClas}.
First we focus on some basic and already known results of CL line classes in AG($3,q$) and PG($3,q$) respectively.

	 \begin{St}(\cite[Theorem 1]{Penttila1991} and \cite[Definition 1.1]{Leo}) \label{ProjEqLines}
	 	Let  \(\mathcal{L}\)  be a set of lines of size \(|\mathcal{L}|=x(q^2+q+1)\) in PG($3,q$) with characteristic vector \(\chi_{\mathcal{L}}\). Then the following properties are equivalent:
	 	\begin{enumerate}
	 		\item  Let \(P_3\) be the point-line incidence matrix of PG($3,q$), then $\chi_{\mathcal{L}} \in (\ker(P_3))^\perp= $Im($P_3^T$).
	 		\item For every line spread \(\mathcal{S}\), \(|\mathcal{L}\cap\mathcal{S}|=x\).
	 		\item For every pair of conjugated switching sets \(\mathcal{R}\) and \(\mathcal{R}'\), \(|\mathcal{L}\cap\mathcal{R}|= | \mathcal{L} \cap \mathcal{R}'|.\)
	 		\item For every line $\ell$, there are exactly \((x-\chi_{\mathcal{L}}(\ell))q^2\) elements of \(\mathcal{L}\) disjoint to $\ell$.  Here $\chi_{\mathcal{L}}(\ell)=1$ if $\ell\in\mathcal{L}$ and zero otherwise.
	 		\item For every point $p$ and plane $\pi$, with $p \in \pi$, 
	 		\[| star(p)\cap \mathcal{L}| + |line(\pi)\cap\mathcal{L}|=x +(q+1)|pencil(p,\pi)\cap \mathcal{L}|.\]
			Here $star(p)$ is the set of lines through the point $p$, $line(\pi)$ is the set of lines in the plane $\pi$ and $pencil(p,\pi)=star(p)\cap line(\pi)$.
	 	\end{enumerate}
	 \end{St}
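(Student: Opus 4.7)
The plan is to establish the equivalences via a cycle $(1) \Rightarrow (2) \Rightarrow (3) \Rightarrow (1)$ and then to show $(1) \Leftrightarrow (4)$ and $(1) \Leftrightarrow (5)$ separately. All five conditions are reformulations of the statement that $\chi_{\mathcal{L}}$ is orthogonal to a specific natural subspace of $\mathbb{C}^{\Pi_1}$, so the common strategy is to exhibit an explicit family of vectors in $\ker(P_3)$ and match the combinatorial condition to orthogonality against that family.

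For $(1)\Rightarrow(2)$ I would essentially repeat the argument of Lemma \ref{2to8} in the projective setting: for any line spread $\mathcal{S}$, every point lies on exactly one line of $\mathcal{S}$, so $\chi_{\mathcal{S}}-(q^2+q+1)^{-1}\mathbf{j}\in\ker(P_3)$, and taking the inner product with $\chi_{\mathcal{L}}\in(\ker P_3)^\perp$ gives $|\mathcal{L}\cap\mathcal{S}|=|\mathcal{L}|/(q^2+q+1)=x$. For $(2)\Rightarrow(3)$ I would take a conjugated switching pair $(\mathcal{R},\mathcal{R}')$ covering a point set $X$, extend the partial spread of PG$(3,q)\setminus X$ to a full line spread $\mathcal{T}$, and note that both $\mathcal{T}\cup\mathcal{R}$ and $\mathcal{T}\cup\mathcal{R}'$ are line spreads; applying $(2)$ to each and subtracting yields $|\mathcal{L}\cap\mathcal{R}|=|\mathcal{L}\cap\mathcal{R}'|$.

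The main obstacle is $(3)\Rightarrow(1)$, which requires showing that the differences $\chi_{\mathcal{R}}-\chi_{\mathcal{R}'}$, ranging over all conjugated switching pairs, span $\ker(P_3)$. The cleanest route is to exhibit sufficiently many explicit switching pairs \textemdash{} for example the two reguli on each hyperbolic quadric, which yields a switching set on $(q+1)^2$ points \textemdash{} and then verify by a rank/dimension computation that these differences already exhaust $\ker(P_3)$. The dimension of $(\ker P_3)^\perp$ is controlled by Lemma \ref{FullRankIncidence} and the decomposition of the line-space into $\mathrm{PGL}(4,q)$-isotypic components; matching that against the span of the regulus-switch differences is where the geometric content of the theorem really lives.

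Finally, for $(1)\Leftrightarrow(4)$ I would work with the skew-adjacency matrix $N$ on $\Pi_1$, where $N_{\ell,\ell'}=1$ iff $\ell$ and $\ell'$ are disjoint. Condition $(4)$ is literally the identity $N\chi_{\mathcal{L}}=xq^2\mathbf{j}-q^2\chi_{\mathcal{L}}$, which forces $\chi_{\mathcal{L}}$ into the span of $\mathbf{j}$ together with the $(-q^2)$-eigenspace of $N$, and a direct spectral computation on the Grassmann graph identifies this span with $(\ker P_3)^\perp$. For $(1)\Leftrightarrow(5)$ I would check that, for every incident flag $(p,\pi)$, the vector $\chi_{\mathrm{star}(p)}+\chi_{\mathrm{line}(\pi)}-(q+1)\chi_{\mathrm{pencil}(p,\pi)}-\mathbf{j}$ lies in $\ker(P_3)$ (by counting its incidence at each point of the space), and conversely that these flag-vectors generate $\ker(P_3)$; then the pointwise equation in $(5)$ is exactly orthogonality of $\chi_{\mathcal{L}}$ against this generating set, equivalently $(1)$.
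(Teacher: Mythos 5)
The paper does not actually prove this theorem: it is quoted from Penttila and from Govaerts--Storme, so there is no internal proof to compare your attempt against. Your outline does follow the standard route from the literature (incidence-matrix kernel, switching sets, spectrum of the disjointness relation), and the parts you make precise are sound: $(1)\Rightarrow(2)$ works exactly as in Lemma \ref{2to8}, since $\chi_{\mathcal{S}}-(q^2+q+1)^{-1}\mathbf{j}\in\ker(P_3)$ for every spread $\mathcal{S}$, and the rewriting of $(4)$ as $(N+q^2I)\chi_{\mathcal{L}}=xq^2\mathbf{j}$ is correct.

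There are, however, two genuine gaps. The first is your step $(2)\Rightarrow(3)$: you assume that an arbitrary conjugated switching pair $(\mathcal{R},\mathcal{R}')$ covering a point set $X$ can be completed by a partial spread $\mathcal{T}$ covering $\mathrm{PG}(3,q)\setminus X$, so that $\mathcal{T}\cup\mathcal{R}$ and $\mathcal{T}\cup\mathcal{R}'$ are spreads. Nothing guarantees this: partial spreads of $\mathrm{PG}(3,q)$ need not extend to spreads (maximal partial spreads of positive deficiency exist), and there is no reason the complementary point set should be partitionable by lines at all. The clean repair is to prove $(1)\Rightarrow(3)$ directly, via $\chi_{\mathcal{R}}-\chi_{\mathcal{R}'}\in\ker(P_3)$ because the two sets cover the same points -- this is precisely how the paper handles the analogous affine step in Theorem \ref{equivCLAff} -- and to reserve the completion argument for reguli only, which do lie in regular spreads. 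The second gap is that every hard direction is asserted rather than proved: that regulus-switch differences span $\ker(P_3)$, that $\langle\mathbf{j}\rangle$ plus the $(-q^2)$-eigenspace of $N$ equals $\mathrm{Im}(P_3^T)$, and that the flag vectors of $(5)$ span $\ker(P_3)$ are three nontrivial spanning statements that carry essentially all the content of the theorem, and none is carried out. A minor slip besides: the flag vector must be $\chi_{star(p)}+\chi_{line(\pi)}-(q+1)\chi_{pencil(p,\pi)}-(q^2+q+1)^{-1}\mathbf{j}$; with coefficient $1$ on $\mathbf{j}$ it does not lie in $\ker(P_3)$, and its inner product with $\chi_{\mathcal{L}}$ produces $|\mathcal{L}|$ rather than $x$.
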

	Some trivial examples of CL line classes in PG($3,q$) are the following:
\begin{Ex}\cite[Proposition 3.4]{Cameron-Liebler}\label{TrivEx}
The following sets are CL line classes in PG($3,q$).
\begin{enumerate}
\item The empty set, which has parameter $x=0$.
\item  All the lines through a fixed point $p$. This example has parameter $x=1$.
 \item All the lines in a fixed plane. Also this example has parameter $x=1$ and is in fact the dual of the previous example. 
\item  Take a plane $\pi$ and a point $p \not \in\pi$, then the set of all the lines through $p$ together with all the lines in $\pi$ gives a CL line class of parameter $x=2$.
\end{enumerate}
\end{Ex}
\begin{St}\cite[Proposition 3.4]{Cameron-Liebler}\label{TrivExSt}
If $\mathcal{L}$ is a CL line class of parameter $x \in \{0,1,2\}$ in PG($3,q$), then $\mathcal{L}$ is listed in Example \ref{TrivEx}.
\end{St}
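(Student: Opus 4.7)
The proof plan is to handle the three parameter values separately, with $x=2$ carrying essentially all of the work. For $x=0$ the size condition $|\mathcal{L}|=0\cdot(q^2+q+1)$ forces $\mathcal{L}=\emptyset$. For $x=1$ I would invoke equivalence (4) of Theorem \ref{ProjEqLines}: each line $\ell\in\mathcal{L}$ has exactly $(1-1)q^2=0$ disjoint partners in $\mathcal{L}$, so the lines of $\mathcal{L}$ pairwise meet. A short elementary argument in PG($3,q$) (take three non-concurrent pairwise meeting lines and show their pairwise intersection points are coplanar, then extend) shows that a set of pairwise concurrent lines is contained either in a star $\text{star}(p)$ or in a plane-line set $\text{line}(\pi)$, and the equality $|\mathcal{L}|=q^2+q+1=|\text{star}(p)|=|\text{line}(\pi)|$ upgrades the containment to equality.

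For $x=2$ the strategy is a reduction to the $x=1$ case by peeling off a trivial subclass. Suppose for the moment that we have exhibited a point $p$ with $\text{star}(p)\subseteq\mathcal{L}$. By Example \ref{TrivEx}(2) and Theorem \ref{ProjEqLines}(1), $\chi_{\text{star}(p)}\in(\ker P_3)^\perp$, and hence so does $\chi_{\mathcal{L}\setminus\text{star}(p)}=\chi_\mathcal{L}-\chi_{\text{star}(p)}$; since $|\mathcal{L}\setminus\text{star}(p)|=q^2+q+1$, Theorem \ref{ProjEqLines} identifies $\mathcal{L}\setminus\text{star}(p)$ as a CL class of parameter $1$. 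The case just settled then gives $\mathcal{L}\setminus\text{star}(p)=\text{star}(p')$ or $\mathcal{L}\setminus\text{star}(p)=\text{line}(\pi)$. The first is impossible, since two distinct stars always share the line $pp'$ while by construction $\mathcal{L}\setminus\text{star}(p)$ is disjoint from $\text{star}(p)$. Thus $\mathcal{L}\setminus\text{star}(p)=\text{line}(\pi)$, and disjointness forces $p\notin\pi$, so $\mathcal{L}$ is exactly the configuration of Example \ref{TrivEx}(4). A dual argument handles the case where a plane $\pi$ with $\text{line}(\pi)\subseteq\mathcal{L}$ is available instead.

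The main obstacle is therefore producing such a ``full'' point or plane. My plan is to exploit equivalence (5) of Theorem \ref{ProjEqLines}, which for $x=2$ reads
\[a_p+b_\pi=2+(q+1)\,c_{p,\pi}\]
at every incident flag $(p,\pi)$, where $a_p=|\text{star}(p)\cap\mathcal{L}|$, $b_\pi=|\text{line}(\pi)\cap\mathcal{L}|$ and $c_{p,\pi}=|\text{pencil}(p,\pi)\cap\mathcal{L}|$. Varying $p$ inside a fixed $\pi$, and $\pi$ through a fixed $p$, pins all $a_p$ (resp.\ all $b_\pi$) into a single residue class modulo $q+1$. Combining this congruence with the global double-count $\sum_p a_p=(q+1)|\mathcal{L}|$ and with the second-moment identity on $\sum_p a_p^{\,2}$ obtained by counting ordered intersecting pairs of lines of $\mathcal{L}$ via equivalence (4), I expect to force $\max_p a_p=q^2+q+1$ or $\max_\pi b_\pi=q^2+q+1$, which yields the required full point or plane and completes the reduction. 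This congruence-and-counting step is the technical heart of the argument.
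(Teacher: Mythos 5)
The paper offers no proof of this statement --- it is imported verbatim from Cameron and Liebler's Proposition 3.4 --- so your proposal is measured against that cited source rather than against an argument in the text. Your handling of $x=0$ and $x=1$ is correct and complete: equivalence (4) of Theorem \ref{ProjEqLines} makes $\mathcal{L}$ a set of pairwise intersecting lines, the classical lemma places such a set in a star or in a plane, and the size $q^2+q+1$ upgrades containment to equality. The peeling step for $x=2$ is also sound, including the observation that two distinct stars share a line and that disjointness forces $p\notin\pi$.

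The gap is exactly where you locate it, and it is a real one: the tools you propose for producing a full point or plane --- the congruence $a_p\equiv\alpha\pmod{q+1}$ from equivalence (5) together with the first and second moments of $(a_p)$ --- are provably insufficient. The congruence and the first moment force $\alpha=1$; writing $a_p=1+(q+1)m_p$ with $m_p\ge 0$, the double count $\sum_p a_p=(q+1)|\mathcal{L}|$ gives $\sum_p m_p=(q+1)^2$, and the pair count from equivalence (4) gives $\sum_p m_p^2=2q^2+q+1$. The intended configuration (one point with $m_p=q$ and $q^2+q+1$ points with $m_p=1$) satisfies this system, but so does, for $q$ odd, a distribution with $q(q-1)/2$ points having $m_p=2$, $3q+1$ points having $m_p=1$, and all others $0$: both sums check, yet no point is full. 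Hence $\max_p a_p=q^2+q+1$ cannot be extracted from these identities alone, and the dual computation for the $b_\pi$ fails for the same reason. Closing the argument requires local information: fix a line $\ell\in\mathcal{L}$, and apply the flag identity to every incident pair consisting of a point of $\ell$ and a plane through $\ell$; this shows the pencil counts split additively as $c_{ij}=u_i+v_j-1$ with $0\le c_{ij}\le q$, where $a_{P_i}=1+(q+1)u_i$ and $b_{\pi_j}=1+(q+1)v_j$, and it is the interplay of these inequalities that actually forces a full star or a full plane in Cameron and Liebler's proof. As written, your "technical heart" is a plan rather than a proof, and the plan as stated cannot succeed without this additional geometric input.
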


It was long thought that these examples, together with their complements, were the only  CL line classes. This was disproven by Drudge in \cite{Drudge} who found an example of parameter $x=5$ in PG($3,3$). This example was later generalized  in \cite{BruenAndDrude} to an infinite family of parameter $x=\frac{q^2+1}{2}$ in PG($3,q$), for $q$ odd. More recently, another infinite family was found simultaneously in \cite{DeBeule} and \cite{Feng}, and has parameter $x=\frac{q^2-1}{2}$ in PG($3,q$), with $q \equiv 5$ or $9 \pmod{12}$.
	
We now list some consequences and basic results on CL line classes in AG($3,q$).
\begin{Le}\label{size}
	For every CL line class $\mathcal{L}$ in AG($3,q$), it holds that \(|\mathcal{L}|=x(q^2+q+1)\).
\end{Le}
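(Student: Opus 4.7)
The plan is to exploit the type II line spreads from Lemma \ref{SpreadsAG(3,q)}(2), which partition the line set of AG($3,q$) into parallel classes. Each affine line has a unique point at infinity (its direction), so fixing a point $p \in \pi_\infty$ and collecting every affine line through $p$ produces one type II spread of size $q^2$, and as $p$ ranges over $\pi_\infty$ these spreads cover each affine line exactly once.

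First I would note that $|\pi_\infty| = q^2+q+1$, so there are exactly $q^2+q+1$ type II spreads $\mathcal{S}_p$, indexed by the points $p$ of the plane at infinity, and
\[
\bigsqcup_{p \in \pi_\infty} \mathcal{S}_p
\]
equals the full set of lines of AG($3,q$). Next, since $\mathcal{L}$ is a CL line class of parameter $x$, Definition \ref{CLLineClass} applies to every line spread, in particular to each $\mathcal{S}_p$, giving $|\mathcal{L} \cap \mathcal{S}_p| = x$. Summing over $p \in \pi_\infty$ and using the disjointness above yields
\[
|\mathcal{L}| \;=\; \sum_{p \in \pi_\infty} |\mathcal{L} \cap \mathcal{S}_p| \;=\; (q^2+q+1)\,x,
\]
which is precisely the claimed equality.

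There is no real obstacle here: the only subtlety is making sure that the type II spreads do partition the affine line set, which is immediate from the fact that any affine line meets $\pi_\infty$ in a unique point. One could alternatively derive the size by invoking Theorem \ref{AffCLBasic} together with the known size formula for CL line classes in PG($3,q$), but the direct counting argument with type II spreads is shorter and self-contained.
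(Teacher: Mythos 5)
Your argument is correct and is essentially identical to the paper's proof: both count $|\mathcal{L}|$ by summing the constant intersection number $x$ over the $q^2+q+1$ pairwise disjoint type II spreads, which together partition the affine line set. No issues.
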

\begin{proof}
	From the definition of a CL line class in AG($3,q$), we know that $\mathcal{L}$ contains $x$ lines of every line spread of type II. Since there are $q^2+q+1$ such line spreads all consisting of disjoint line sets, the assertion follows.
\end{proof}
The analog goes for the following equivalences, which hold for as well PG($3,q$) as for AG($3,q$).

\begin{Le}\label{PropCLLC}\label{ComplementCLLC}
Consider CL line classes $\mathcal{L}$ and $\mathcal{L}'$ with parameter $x$ and $x'$ (in PG($3,q$) or AG($3,q$)), then the following statements are true.
\begin{enumerate}
\item For the parameter $x$, $0\leq x \leq q^2+1$ in PG($3,q$) or $0 \leq x\leq q^2$ in AG($3,q$).
\item If $\mathcal{L}'\subseteq \mathcal{L}$, then $\mathcal{L}\setminus \mathcal{L}'$ is a CL line class of parameter $x-x'$. Consequently, the complement of $\mathcal{L}$ in AG($3,q$) and PG($3,q$) is also a CL line class of parameter $q^2-x$ and $q^2+1-x$, respectively.
\item If $\mathcal{L} \cap \mathcal{L}'=\emptyset$, then $\mathcal{L}\cup \mathcal{L}'$ is a CL line class of parameter $x+x'$.
\end{enumerate}
\end{Le}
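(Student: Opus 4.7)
The three parts reduce to additivity of the intersection count with a fixed spread, so my plan is to exploit this additivity systematically after first bounding the count on a single spread.

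For part (1) I fix any line spread $\mathcal{S}$ and use the trivial inequalities $0 \leq |\mathcal{L} \cap \mathcal{S}| \leq |\mathcal{S}|$. Combined with the spread size formulas from Definition \ref{DefSpread} (namely $|\mathcal{S}|=q^2+1$ in PG($3,q$) and $|\mathcal{S}|=q^2$ in AG($3,q$)) and the defining equation $|\mathcal{L}\cap\mathcal{S}|=x$, this yields the stated bounds on $x$. The existence of at least one spread is guaranteed in PG($3,q$) by classical constructions and in AG($3,q$) by the type II spreads from Lemma \ref{SpreadsAG(3,q)}.

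For parts (2) and (3) I verify the defining intersection property directly. For any line spread $\mathcal{S}$, the hypothesis $\mathcal{L}'\subseteq\mathcal{L}$ yields
\[|(\mathcal{L}\setminus\mathcal{L}')\cap\mathcal{S}|=|\mathcal{L}\cap\mathcal{S}|-|\mathcal{L}'\cap\mathcal{S}|=x-x',\]
while the disjointness hypothesis $\mathcal{L}\cap\mathcal{L}'=\emptyset$ yields
\[|(\mathcal{L}\cup\mathcal{L}')\cap\mathcal{S}|=|\mathcal{L}\cap\mathcal{S}|+|\mathcal{L}'\cap\mathcal{S}|=x+x'.\]
The complement statement in (2) then follows by observing that the full line set of PG($3,q$), respectively AG($3,q$), is itself a CL line class: since every spread partitions the point set, intersecting the full line set with any spread $\mathcal{S}$ returns all of $\mathcal{S}$, so the corresponding parameters are $q^2+1$ and $q^2$ respectively. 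Applying the set-difference half of (2) with this full line set in the role of $\mathcal{L}$ and the given CL line class in the role of $\mathcal{L}'$ produces the claimed parameter for the complement.

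There is no substantial obstacle here; every claim is a one-line consequence of set-theoretic additivity applied to the defining intersection property in Definition \ref{CLLineClass}. The only point deserving explicit attention is the sanity check that the full set of lines is itself a CL line class with parameter equal to the spread size, since this is exactly what enables the complement argument in the affine as well as the projective setting.
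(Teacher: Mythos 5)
Your proof is correct and takes essentially the same approach as the paper, which simply remarks that all three parts follow from Definition \ref{CLLineClass} and the size of a line spread; you have merely written out the additivity argument and the complement step that the paper leaves implicit.
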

\begin{proof}
Property (1) follows from the size of a line spread and Definition \ref{CLLineClass}. The other properties also follow from the same definition.
\end{proof}

Theorem \ref{AffCLBasic} gives one connection between CL line classes in AG($3,q$) and in PG($3,q$). We now give a converse connection. 

\begin{St}\label{CLAf3}
	Suppose that \(\mathcal{L}\) is a CL line class in PG($3,q$) of parameter $x$. Then \(\mathcal{L}\) defines a CL line class in AG($3,q$) with the same parameter $x$ if and only if \(\mathcal{L}\) is disjoint to the set of lines in the plane at infinity of AG($3,q$).
\end{St}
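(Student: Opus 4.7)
The plan is to handle the two directions asymmetrically. The forward direction is essentially definitional: a CL line class in AG($3,q$) is, by Definition \ref{CLLineClass}, a set of lines of AG($3,q$), hence consists of affine lines only and cannot contain any line of $\pi_\infty$.

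For the converse, suppose $\mathcal{L}$ is a CL line class in PG($3,q$) of parameter $x$ that is disjoint from the set of lines in $\pi_\infty$. I would chain together the three preliminary results from Section~2. First, by equivalence (1) of Theorem \ref{ProjEqLines}, the characteristic vector of $\mathcal{L}$ lies in $(\ker P_3)^\perp$. Second, since $\mathcal{L}$ contains no line of $\pi_\infty$, Theorem \ref{chiUpToDown} (with $n=3$, $k=1$) applies and gives that the restriction of this vector to the affine lines lies in $(\ker A)^\perp$, where $A$ is the point-line incidence matrix of AG($3,q$). Third, Lemma \ref{2to8} then forces $|\mathcal{L}\cap\mathcal{S}|$ to take the same value $y = \gauss{3}{1}_q^{-1}|\mathcal{L}|$ for every line spread $\mathcal{S}$ of AG($3,q$).

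It remains to identify $y$ with $x$. Since $\mathcal{L}$ is a CL line class in PG($3,q$) of parameter $x$, Theorem \ref{ProjEqLines} gives $|\mathcal{L}|=x(q^2+q+1)$, while $\gauss{3}{1}_q = q^2+q+1$ is precisely the number of affine lines through a point of AG($3,q$). Therefore $y=x$, and $\mathcal{L}$ is a CL line class of parameter $x$ in AG($3,q$), as required.

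I do not anticipate a real obstacle: the entire argument is a bookkeeping chain through the preliminary results, with the heavy lifting already done in Theorem \ref{chiUpToDown} (which is exactly tailored to transfer the projective algebraic characterization to the affine one under the missing-infinity hypothesis). The only step worth pausing on is the numerical identification of the affine intersection number $y$ with the projective parameter $x$, which follows from the standard size formula $|\mathcal{L}|=x(q^2+q+1)$ in PG($3,q$).
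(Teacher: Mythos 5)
Your converse direction is exactly the paper's argument: Theorem \ref{ProjEqLines}(1) puts the characteristic vector in $(\ker(P_3))^\perp$, Theorem \ref{chiUpToDown} transfers it to $(\ker(A))^\perp$ because no line of $\mathcal{L}$ lies in $\pi_\infty$, and Lemma \ref{2to8} gives the constant intersection number $y=\gauss{3}{1}_q^{-1}|\mathcal{L}|=x$. (The paper identifies the parameter via Lemma \ref{size} rather than via the value of $\gauss{3}{1}_q$, but that is cosmetic.) This half is fine.

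The forward direction is where you have a gap. You read ``$\mathcal{L}$ defines a CL line class in AG($3,q$)'' as forcing $\mathcal{L}$ to consist of affine lines by definition, but the intended (and only non-vacuous) reading is that the \emph{restriction} of $\mathcal{L}$ to AG($3,q$) --- i.e.\ the set $\mathcal{L}'$ of affine parts of the lines of $\mathcal{L}$ not lying in $\pi_\infty$ --- is a CL line class of the same parameter $x$. Under that reading the claim is not definitional: a projective CL line class can contain lines of $\pi_\infty$ and still restrict to an affine CL line class, just not of the same parameter. For instance, the parameter-$2$ class of Example \ref{TrivEx}(4) with $\pi=\pi_\infty$ and $p$ affine restricts to the point pencil of $p$, an affine CL line class of parameter $1$. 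So the hypothesis ``same parameter $x$'' is doing real work and must be used. The paper's argument is short but genuine: $\mathcal{L}'$, viewed projectively, is a CL line class of parameter $x$ by Theorem \ref{AffCLBasic}, hence $|\mathcal{L}'|=x(q^2+q+1)=|\mathcal{L}|$; since $\mathcal{L}'\subseteq\mathcal{L}$, equality of sizes forces $\mathcal{L}=\mathcal{L}'$, so $\mathcal{L}$ contains no line of $\pi_\infty$. You should add this size argument; without it the ``only if'' direction is unproved.
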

\begin{proof}
Let $\mathcal{L}$ be a CL line class with parameter \(x\) in PG($3,q$) that is disjoint to the set of lines in the plane at infinity. Then we can consider $A$ and $P_3$ from Construction \ref{IncidenceMatrix} for $k=1$. These are the point-line incidence matrices of AG($3,q$) and PG($3,q$) respectively. Secondly, we  write the characteristic vector of $\mathcal{L}$ corresponding to the projective space as \(\widetilde{\chi}_{\mathcal{L}}\) and the characteristic vector corresponding to the affine space as \(\chi_{\mathcal{L}}\). Since we know that \(\mathcal{L}\) is disjoint to the set of lines at infinity and \(\mathcal{L}\) is a CL line class in PG($3,q$), we have that

	\[\widetilde{\chi}_{\mathcal{L}}=\begin{pmatrix}
	\chi_{\mathcal{L} }\\
	\bar{0}
	\end{pmatrix} \in (\ker(P_{3}))^\perp.\]
	Using Theorem \ref{chiUpToDown}, we get that  \(\chi_{\mathcal{L} } \in (\ker(A))^\perp\). So due to Lemma \ref{2to8}, we find that \(\mathcal{L}\) is also a CL line class in AG($3,q$). Since \(|\mathcal{L}|=(q^2+q+1)x\) as a projective CL line class, we know by using  Lemma \ref{size} that $\mathcal{L}$  also has parameter $x$ in AG($3,q$).
	
	We now prove the converse. Let \(\mathcal{L}\) be a projective CL line class of parameter $x$, such that its restriction to the affine space also defines a CL line class \(\mathcal{L}'\) of parameter $x$. Then by Theorem \ref{AffCLBasic}, it follows  that this restriction considered in PG($3,q$) is also a projective CL line class  of parameter $x$. The line set \(\mathcal{L}'\) will contain no lines at infinity and we have that \(\mathcal{L}'\subseteq \mathcal{L}\). Due to the fact that both have size $x(q^2+q+1)$, we obtain that \(\mathcal{L}=\mathcal{L}'\) and does not contain lines in \(\pi_\infty\).
\end{proof}
\begin{Opm}
Suppose that $\mathcal{L}$ is a CL line class with parameter $x+1$ in PG($3,q$), that contains all the lines in the plane at infinity. Then we can use Example \ref{TrivEx} and Lemma \ref{PropCLLC} to obtain a CL line class with parameter $x$ in PG($3,q$) disjoint to the set of lines at infinity. Here we can use the previous theorem and we see a $1-1$ connection between these projective CL line classes and CL line classes in AG($3,q$) of parameter $x$. 
\end{Opm}

\section{Equivalent definitions and non-existence conditions}
There are a lot of equivalent definitions for CL line classes in PG($3,q$). We can ask, keeping Theorem \ref{AffCLBasic} in mind, if they correspond to equivalent definitions in the affine case. 

\begin{St}\label{equivCLAff}
	Consider in the affine space AG($3,q$) a set of lines \(\mathcal{L}\) such that \(|\mathcal{L}|=x(q^2+q+1)\), with \(x\) a positive integer. Let $A$ be the point-line incidence matrix of AG($3,q$), then the following properties are equivalent.
	\begin{enumerate}
		\item For every line spread $\mathcal{S}$, $|\mathcal{L}\cap \mathcal{S}|=x$.
		\item The characteristic vector  \(\chi_{\mathcal{L}}\in (\ker(A))^\perp\).
		\item The characteristic vector \(\chi_{\mathcal{L}}\in\) Im\((A^T)\).
		\item For every pair of conjugated switching sets \(\mathcal{R}\) and \(\mathcal{R}'\), \(|\mathcal{L}\cap\mathcal{R}|=|\mathcal{L}\cap\mathcal{R}'|\).
		
		\item  For every line \(\ell\), the number of elements of \(\mathcal{L}\) disjoint to \(\ell\) is equal to
		\[(q^2+1)(x-\chi_{\mathcal{L}}(\ell)),\]
		and through every point at infinity there are exactly \(x\) lines of \(\mathcal{L}\). Here $\chi_{\mathcal{L}}(\ell)=1$ if $\ell\in\mathcal{L}$ and zero otherwise.
	\end{enumerate}
\end{St}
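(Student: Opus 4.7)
The plan is to orient the proof around two hubs: property (2), which is the linear-algebra reformulation, and the projective connection established in Theorems \ref{AffCLBasic} and \ref{CLAf3}. Going through PG$(3,q)$ lets me reuse the full strength of Theorem \ref{ProjEqLines} without redoing any spanning arguments from scratch. Concretely, I would first dispatch $(2)\Leftrightarrow(3)$ by the textbook identity $(\ker A)^\perp=\mathrm{Im}(A^T)$; take $(2)\Rightarrow(1)$ directly from Lemma \ref{2to8}; and for $(1)\Rightarrow(2)$ use Theorem \ref{AffCLBasic} to promote $\mathcal{L}$ to a projective CL line class, Theorem \ref{ProjEqLines}(1) to conclude $\widetilde{\chi}_{\mathcal{L}}\in(\ker P_3)^\perp$, and then Theorem \ref{chiUpToDown} (applicable because $\mathcal{L}$ has no lines at infinity) to descend back to $\chi_{\mathcal{L}}\in(\ker A)^\perp$.

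For property (4), $(2)\Rightarrow(4)$ is immediate once one observes that conjugated switching sets $\mathcal{R},\mathcal{R}'$ satisfy $A\chi_{\mathcal{R}}=A\chi_{\mathcal{R}'}$, so $\chi_{\mathcal{R}}-\chi_{\mathcal{R}'}\in\ker A$, and pairing with $\chi_{\mathcal{L}}$ gives the claim. For $(4)\Rightarrow(1)$ I would use the fact that any two affine line spreads are themselves a pair of conjugated switching sets (both cover every affine point), so all intersections $|\mathcal{L}\cap\mathcal{S}|$ equal some common constant $c$; summing over the $q^2+q+1$ type II spreads of Lemma \ref{SpreadsAG(3,q)}, which partition the affine lines by point at infinity, yields $(q^2+q+1)c=|\mathcal{L}|=x(q^2+q+1)$ and hence $c=x$. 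Property (5) I would handle by again routing through the projective version, namely Theorem \ref{ProjEqLines}(4). Assuming (1), the type II spread through $p\in\pi_\infty$ contributes $x$ lines of $\mathcal{L}$ through $p$, giving the second clause; the first clause then comes from combining the projective count $(x-\chi_{\mathcal{L}}(\ell))q^2$ of lines of $\mathcal{L}$ skew to $\ell$ with the $x-\chi_{\mathcal{L}}(\ell)$ lines of $\mathcal{L}$ parallel to $\ell$ but distinct from it. Conversely, (5) lets me reconstruct Theorem \ref{ProjEqLines}(4) for every projective line (for affine lines by inverting the same decomposition, for lines at infinity by the direct count $|\mathcal{L}|-(q+1)x=xq^2$), so $\mathcal{L}$ is a projective CL line class disjoint from $\pi_\infty$, and Theorem \ref{CLAf3} returns (1).

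The main obstacle I anticipate is the bookkeeping in $(1)\Leftrightarrow(5)$: the affine notion of ``disjoint'' bundles projective skewness together with parallelism, so every count must cleanly separate the $\mathcal{L}$-lines through the point at infinity $p_\ell$ of $\ell$ from the remaining $\mathcal{L}$-lines, and the projective-line-at-infinity case in $(5)\Rightarrow(1)$ requires both clauses of (5) together with the size hypothesis $|\mathcal{L}|=x(q^2+q+1)$ in order to go through.
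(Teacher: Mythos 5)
Your proposal follows essentially the same route as the paper's proof: $(2)\Leftrightarrow(3)$ is the standard identity, $(1)\Rightarrow(2)$ goes through PG$(3,q)$ via Theorems \ref{AffCLBasic}, \ref{ProjEqLines} and \ref{chiUpToDown} (your extra direct derivation of $(2)\Rightarrow(1)$ from Lemma \ref{2to8} is a harmless shortcut the paper does not need), $(2)\Rightarrow(4)\Rightarrow(1)$ uses switching sets, and $(1)\Leftrightarrow(5)$ splits affine disjointness into projective skewness plus the lines of $\mathcal{L}$ through $\ell\cap\pi_\infty$ exactly as the paper does. One small correction in $(4)\Rightarrow(1)$: two distinct line spreads need not be disjoint as line sets, so they are not literally a pair of conjugated switching sets under Definition \ref{DefSpread}; you must apply (4) to $\mathcal{S}_1\setminus\mathcal{S}_2$ and $\mathcal{S}_2\setminus\mathcal{S}_1$ (as the paper does), after which the shared lines cancel and $|\mathcal{L}\cap\mathcal{S}_1|=|\mathcal{L}\cap\mathcal{S}_2|$ follows as you intend.
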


\begin{proof}
	First of all it is clear that (2) is equivalent with (3), since this is the case for every matrix.
	\begin{enumerate}
		\item From (1) to (2): Note that $\mathcal{L}$ is by definition a CL line class in AG($3,q$). So from Theorem \ref{AffCLBasic}, we know that \(\mathcal{L}\) defines a CL line class in the corresponding projective space. 
Here we can use Theorem \ref{ProjEqLines}  to obtain that the characteristic vector corresponding to \(\mathcal{L}\) in PG($3,q$) lies in \((\ker(P_3))^\perp\), with \(P_3\) the point-line incidence matrix of PG($3,q$) (see Construction \ref{IncidenceMatrix}). 
Since $\mathcal{L}$ does not contain lines in \(\pi_\infty\), we may use Theorem \ref{chiUpToDown}. Thus  we obtain that the characteristic vector restricted to the affine space lies in \((\ker(A))^\perp\). But this restriction is exactly  \(\chi_{\mathcal{L}}\).

\item From (2) to (4): 	Since \(\mathcal{R}\) and \(\mathcal{R}'\) are conjugated switching sets, they cover the same set of points, so it necessarily holds for the characteristic vectors that
		\[\chi_{\mathcal{R}}-\chi_{\mathcal{R}'}\in \ker (A).\]
		This implies that
		\[\chi_{\mathcal{L}}\cdot(\chi_{\mathcal{R}}-\chi_{\mathcal{R}'})=0.\]
		From here we find that
		\[|\mathcal{L}\cap\mathcal{R}|-|\mathcal{L}\cap\mathcal{R}'|=0,\]
		which proves the statement.
		
		\item From (4) to (1): 	Consider two spreads \(\mathcal{S}_1, \mathcal{S}_2\). Then we know that \(\mathcal{S}_1\setminus \mathcal{S}_2\) and \(\mathcal{S}_2\setminus \mathcal{S}_1\) are conjugated switching sets, since they cover the same set of points and have no elements in common. So, by (4), we know that
		\[|\mathcal{L}\cap(\mathcal{S}_1\setminus\mathcal{S}_2)|=|\mathcal{L}\cap(\mathcal{S}_2\setminus\mathcal{S}_1)|\]
		which implies that 
		\[|\mathcal{L}\cap\mathcal{S}_1|=|\mathcal{L}\cap\mathcal{S}_2|=c. \]
But we still have to prove that \(c=x\).
By definition, we obtain that $\mathcal{L}$ is a CL line class in AG($3,q$) with parameter $c$. Hence, we obtain that $|\mathcal{L}|=c(q^2+q+1)$, such that, by assumption, we have that $c=x$.
	\end{enumerate}
	
	Till here we already have proven that the first four points are equivalent, so we only need to prove the equivalence with property (5).
	
	First if Property (1) holds, then \(\mathcal{L}\) is a CL line class in AG($3,q$) with parameter $x$. So, by Theorem \ref{AffCLBasic}, this line class is also a CL line class in the corresponding projective space with the same parameter $x$. Due to Theorem \ref{ProjEqLines},  we know that there are exactly \(q^2(x- \chi_{\mathcal{L}}(\ell))\) lines of \(\mathcal{L}\) disjoint to \(\ell\) in PG($3,q$). Since \(\mathcal{L}\) is an affine line set, these \(q^2(x- \chi_{\mathcal{L}}(\ell))\) lines are all lines of $\mathcal{L}$ that are disjoint to $\ell$ in AG($3,q$). So the only elements we still need to count are those lines of \(\mathcal{L}\) that are disjoint to $\ell$ in AG($3,q$) and intersect in PG($3,q$). Hence, these lines will intersect $\ell$ in the point $p=\ell \cap \pi_\infty$ at infinity. If we consider the line spread \(\mathcal{S}\) of type II containing all affine lines through $p=\ell \cap \pi_\infty$, then \(|\mathcal{L}\cap \mathcal{S}|=x \). From this we obtain the second part and that there are in total exactly \[(q^2+1)(x-\chi_{\mathcal{L}}(\ell))\]
elements of \(\mathcal{L}\) disjoint to \(\ell\) in AG($3,q$).
	
	We now prove the converse direction. Consider a set of lines \(\mathcal{L}\) such that Property (5) holds. Then we know that for every affine line $\ell$ in PG($3,q$), the number of lines of $\mathcal{L}$ that are disjoint to $\ell$ in PG($3,q$) is equal to  \(q^2(x-\chi_{\mathcal{L}}(\ell))\). This follows from the fact that we subtracted those $x-\chi_{\mathcal{L}}(\ell)$ lines that intersect $\ell$ at infinity. 
	But if we now look at a line \(\ell'\subseteq \pi_\infty\), then we know that there are \(q^2\) points in \(\pi_\infty\setminus \ell'\). Through each of those points we have \(x\) affine lines of \(\mathcal{L}\), which are all disjoint to \(\ell'\) in PG($3,q$). These are also all the disjoint lines of \(\mathcal{L}\). Since if we had another line of \(\mathcal{L}\) that is disjoint to \(\ell'\), it should first be an affine line that then intersects \(\pi_\infty\) in a point. This implies that we in fact already counted it. We get that there are \(q^2x\) elements of \(\mathcal{L}\) disjoint to \(\ell'\not \in \mathcal{L}\). 

So we conclude that for any arbitrary projective line \(\ell\) in PG($3,q$), there are exactly
	\(q^2(x-\chi_{\mathcal{L}}(\ell))\)
	lines of \(\mathcal{L}\) disjoint to \(\ell\). This is equivalent with definition (5) in Theorem \ref{ProjEqLines} of a CL line class in the projective space. Thus \(\mathcal{L}\) defines a CL line class in PG($3,q$). Since \(\mathcal{L}\) is a line set that is defined in AG($3,q$), we know that \(\mathcal{L}\) contains no lines in the plane at infinity. Using Theorem \ref{CLAf3}, we have that \(\mathcal{L}\) is a CL line class in AG($3,q$), where the parameter $x$ follows from its size.
\end{proof}

For the last part of this paper, we want to give some non-existence results and some examples. Let us first state a more recent result of Gavrilyuk and Metsch in \cite{MetschAndGavrilyuk} about CL line classes in PG($3,q$).
\begin{St}(\cite[Theorem 1.1]{MetschAndGavrilyuk})\label{MetschRes}
	Suppose that \(\mathcal{L}\) is a CL line class with parameter \(x\) of PG($3,q$). Then for every plane and every point of PG($3,q$), 
	\begin{equation}\label{Metsch}
	\binom{x}{2} +n(n-x)\equiv 0 \mod (q+1),
	\end{equation}
	where \(n\) is the number of lines of \(\mathcal{L}\) in the plane, respectively through the point.
\end{St}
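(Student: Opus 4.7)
The plan follows the approach of Gavrilyuk and Metsch. The core observation is that equivalence (5) of Theorem \ref{ProjEqLines}, applied to a flag $(p,\pi)$, reads
\[|star(p) \cap \mathcal{L}| + |line(\pi) \cap \mathcal{L}| = x + (q+1)\,|pencil(p,\pi) \cap \mathcal{L}|.\]
Subtracting this identity for two planes $\pi_1, \pi_2$ sharing a common point yields $|line(\pi_1) \cap \mathcal{L}| \equiv |line(\pi_2) \cap \mathcal{L}| \pmod{q+1}$. Since any two planes of PG($3,q$) share a line, and hence a point, the residue class $c := |line(\pi) \cap \mathcal{L}| \pmod{q+1}$ is independent of $\pi$. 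Dually, $|star(p) \cap \mathcal{L}| \equiv x - c \pmod{q+1}$ for every point $p$. Hence the theorem, whether evaluated at a plane or at a point, reduces to the single congruence
\[\binom{x}{2} + c(c-x) \equiv 0 \pmod{q+1}.\]

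To establish this congruence I would count the number $I$ of unordered pairs of distinct $\mathcal{L}$-lines meeting in a point in two ways. On the one hand, equivalence (4) of Theorem \ref{ProjEqLines} yields
\[2I = |\mathcal{L}|(|\mathcal{L}|-1) - |\mathcal{L}|(x-1)q^2 = (q+1)(q^2+q+1)\,x\,(x+q-1).\]
On the other hand, since any two meeting lines span a unique plane,
\[2I = \sum_{\pi} m_\pi(m_\pi - 1),\]
where the sum runs over the planes of PG($3,q$) and $m_\pi = |line(\pi) \cap \mathcal{L}|$. Writing $m_\pi = c + (q+1)k_\pi$ with $k_\pi \in \mathbb{Z}$, and using the global identity $\sum_\pi m_\pi = (q+1)|\mathcal{L}|$ to obtain $\sum_\pi k_\pi \equiv x - 2c \pmod{q+1}$, I expand the moment identity, divide by $q+1$, and reduce modulo $q+1$ to obtain
\[2\left[\binom{x}{2} + c(c-x)\right] \equiv 0 \pmod{q+1}.\]

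When $q$ is even, the modulus $q+1$ is odd, the factor $2$ is invertible, and the claimed congruence follows at once. The main obstacle is the odd-$q$ case, where the moment identity above only pins down $\binom{x}{2}+c(c-x)$ modulo $(q+1)/2$. Removing the remaining factor of $2$ is the technical heart of the Gavrilyuk--Metsch argument: one combines the plane-moment identity above with its point-dual $\sum_p |star(p)\cap\mathcal{L}|(|star(p)\cap\mathcal{L}|-1) = 2I$, and then exploits the parity of the integer $(q^2+q+1)x(x+q-1)/2$ via a careful $(q+1)$-adic analysis of the two moment identities to force the sharper congruence.
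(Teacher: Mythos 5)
The paper does not prove this statement at all: it is imported verbatim as \cite[Theorem 1.1]{MetschAndGavrilyuk}, so the only question is whether your reconstruction is complete. Your reduction to a single congruence is correct: the flag identity from Theorem \ref{ProjEqLines}(5) forces all planes into one residue class $c$ and all points into the class $x-c$ modulo $q+1$, and both instances of \eqref{Metsch} collapse to $\binom{x}{2}+c(c-x)\equiv 0\pmod{q+1}$. Your two counts of $I$ are also correct (I verified $2I=(q+1)(q^2+q+1)x(x+q-1)$ and that the plane-moment identity reduces to $x(x-1)+2c(c-x)\equiv 0\pmod{q+1}$), so the even-$q$ case is done.

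For odd $q$, however, there is a genuine gap, and the repair you gesture at would not close it. The point-dual moment identity $\sum_p s_p(s_p-1)=2I$, with $s_p\equiv x-c$, reduces modulo $q+1$ to exactly the same congruence $x(x-1)+2c(c-x)\equiv 0$ (substitute $d=x-c$ and note $d(d-x)=c(c-x)$), so combining the two identities gains nothing beyond the doubled congruence. What actually removes the factor of $2$ is a parity argument inside the \emph{exact} plane-moment identity. Writing $m_\pi=c+(q+1)k_\pi$, $K=\sum_\pi k_\pi=x(q^2+q+1)-(q^2+1)c$ and $Q=\sum_\pi k_\pi^2$, the identity becomes
\begin{equation*}
(q+1)Q=(q^2+q+1)\,x\,(x+q-2c)+(q^2+1)c^2 .
\end{equation*}
Since $k^2\equiv k\pmod 2$ for every integer, $Q\equiv K\equiv x\pmod 2$ when $q$ is odd, so $(q+1)Q\equiv(q+1)x\pmod{2(q+1)}$. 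Reducing the right-hand side modulo $2(q+1)$ using $q^2+q+1\equiv q+2$ and $q^2+1\equiv 2$ yields $x(x-1)+2c(c-x)\equiv 0\pmod{2(q+1)}$, and dividing by $2$ gives the claim. Without this (or an equivalent) extra step, your argument only proves the congruence modulo $(q+1)/2$ for odd $q$ — which is precisely the case the paper relies on for $q=3$ and $q=5$.
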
 

If we translate this result with Theorem \ref{AffCLBasic} to AG($3,q$), it will look like this.

\begin{Gev}\label{GevKlausMain}
	If  \(\mathcal{L}\) defines a CL line class in AG($3,q$) with parameter $x$, then the equation
	\begin{equation}\label{CL1}
	\frac{x(x-1)}{2} \equiv 0 \mod (q+1)
	\end{equation}
	holds.
\end{Gev}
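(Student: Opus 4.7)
The plan is to apply Theorem \ref{MetschRes} to the plane at infinity $\pi_\infty$. First, I would invoke Theorem \ref{AffCLBasic} to promote the affine CL line class $\mathcal{L}$ to a CL line class in PG($3,q$) with the same parameter $x$. This makes the Gavrilyuk--Metsch congruence
\[
\binom{x}{2} + n(n-x) \equiv 0 \pmod{q+1}
\]
available for every plane of PG($3,q$), where $n$ is the number of lines of $\mathcal{L}$ in that plane.

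The decisive observation is that $\mathcal{L}$, being a line set in AG($3,q$), contains no lines at infinity; in particular, the plane $\pi_\infty$ carries $n=0$ lines of $\mathcal{L}$. Substituting $n=0$ into the above congruence kills the term $n(n-x)$ and leaves
\[
\binom{x}{2} = \frac{x(x-1)}{2} \equiv 0 \pmod{q+1},
\]
which is exactly Equation (\ref{CL1}).

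There is no genuine obstacle here; the argument is essentially a two-line consequence of Theorems \ref{AffCLBasic} and \ref{MetschRes}, exploiting the single extra piece of information that an affine CL line class automatically avoids the plane at infinity. The only step worth a brief justification is the applicability of Theorem \ref{MetschRes} to the particular plane $\pi_\infty$, which is immediate because the theorem is stated uniformly over every plane and every point of PG($3,q$).
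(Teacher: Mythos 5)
Your argument is correct and is exactly the paper's own proof: promote $\mathcal{L}$ to a projective CL line class via Theorem \ref{AffCLBasic}, then apply Theorem \ref{MetschRes} to the plane at infinity, where $n=0$ since $\mathcal{L}$ is affine. No differences worth noting.
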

\begin{proof}
	Due to Theorem \ref{AffCLBasic}, every CL line class $\mathcal{L}$ in AG($3,q$) is a CL line class in PG($3,q$) of the same parameter $x$. So we may use Theorem \ref{MetschRes}. Here we notice that $\mathcal{L}$ is disjoint to the set of all lines in a plane, namely the plane at infinity. Thus we may fill in $n=0$.
\end{proof}

One could ask how good this non-existence condition is. What if we for example choose another plane or point in PG($3,q$), could we find more conditions by using Theorem \ref{MetschRes}. Notice first that choosing a point at infinity would not help, since this point defines a line spread of type II and thus would always contain $n=x$ lines of $\mathcal{L}$. This leads to the same result. But what if we choose another plane or point not at infinity? It can be proven that this is not helpful either. This is done in the following lemma.

\begin{Le}\label{NoBetter}
	Let $\mathcal{L}$ be a CL line class of parameter $x$ in AG($3,q$), then for every  plane $\pi$ it holds that 
	$$|line(\pi)\cap\mathcal{L}|\equiv 0 \mod{(q+1)}.$$
Consequently, it holds for every point $p$ that $|star(p)\cap\mathcal{L}|\equiv x \mod{(q+1)}.$
\end{Le}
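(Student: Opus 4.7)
The plan is to exploit that two distinct parallel classes of lines in a fixed affine plane already constitute a pair of conjugated switching sets in AG($3,q$); once this is observed, the equivalence (4) in Theorem \ref{equivCLAff} does essentially all the work.

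First I would fix the plane $\pi$ and set $\ell_\infty := \pi \cap \pi_\infty$. For each point $p \in \ell_\infty$, let $\mathcal{C}_p$ denote the parallel class in $\pi$ of direction $p$, i.e.\ the set of the $q$ affine lines of $\pi$ through $p$. These $q$ lines are pairwise disjoint and partition the $q^2$ affine points of $\pi$, so $\mathcal{C}_p$ is a partial line spread of AG($3,q$). For any two distinct $p, p' \in \ell_\infty$, the sets $\mathcal{C}_p$ and $\mathcal{C}_{p'}$ share no line and cover exactly the same set of affine points, so they form a pair of conjugated switching sets in the sense of Definition \ref{DefSpread}(2). Theorem \ref{equivCLAff}(4) then forces $m := |\mathcal{L} \cap \mathcal{C}_p|$ to be independent of $p$, and since the $q+1$ parallel classes partition $line(\pi)$, summing gives $|line(\pi)\cap\mathcal{L}| = (q+1)m$, which yields the desired divisibility.

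For the consequence I would treat points at infinity and affine points separately. If $p \in \pi_\infty$, then Theorem \ref{equivCLAff}(5) directly gives $|star(p)\cap\mathcal{L}| = x$. If $p$ is affine, every plane of AG($3,q$) through $p$ corresponds to a projective plane through $p$ different from $\pi_\infty$, and I would double count the pairs $(\ell, \pi)$ with $\ell \in \mathcal{L}$, $p \in \pi$ and $\ell \subset \pi$. A line of $\mathcal{L}$ through $p$ lies in $q+1$ such planes, while a line of $\mathcal{L}$ not through $p$ spans a unique plane with $p$, so
\[
\sum_{\pi \ni p} |line(\pi)\cap\mathcal{L}| \;=\; (q+1)\,|star(p)\cap\mathcal{L}| + \bigl(|\mathcal{L}| - |star(p)\cap\mathcal{L}|\bigr).
\]
The left-hand side is divisible by $q+1$ by the first part, and $|\mathcal{L}| = x(q^2+q+1) \equiv x \pmod{q+1}$ by Lemma \ref{size}; reducing modulo $q+1$ then yields $|star(p)\cap\mathcal{L}| \equiv x \pmod{q+1}$.

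I do not foresee any serious obstacle; the only subtlety is to verify that two distinct parallel classes in $\pi$ genuinely satisfy both requirements of Definition \ref{DefSpread}(2) (disjoint as line sets, identical point cover), which is immediate from the affine-plane structure.
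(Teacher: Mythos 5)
Your proof is correct, but it follows a genuinely different route from the paper's. The paper lifts $\mathcal{L}$ to PG($3,q$) via Theorem \ref{AffCLBasic} and then invokes the projective point--plane identity of Theorem \ref{ProjEqLines}(5): taking the point $p'$ at infinity on an affine plane $\pi$, the fact that $|star(p')\cap\mathcal{L}|=x$ (from the type II spread at $p'$) gives $|line(\pi)\cap\mathcal{L}|=(q+1)|pencil(p',\pi)\cap\mathcal{L}|$ at once, and a second application of the same identity with an arbitrary point yields the congruence for $star(p)$. You instead stay entirely inside AG($3,q$): your observation that two distinct parallel classes of $\pi$ form a pair of conjugated switching sets is correct (they are disjoint partial spreads covering the same $q^2$ points), so Theorem \ref{equivCLAff}(4) makes $|\mathcal{L}\cap\mathcal{C}_p|$ constant and the partition of $line(\pi)\cap\mathcal{L}$ into $q+1$ classes gives the divisibility; note that your constant $m$ is exactly the paper's $|pencil(p',\pi)\cap\mathcal{L}|$, so the two first parts are close cousins, with yours replacing the imported projective identity by a direct switching-set argument. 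For the second part your double count of flags $(\ell,\pi)$ through $p$, together with $q^2+q+1\equiv 1 \pmod{q+1}$, is a clean and correct substitute for the paper's reuse of Theorem \ref{ProjEqLines}(5); it is slightly longer but has the merit of being self-contained in the affine space and of not needing the full strength of the projective equivalences. Both arguments are complete; the paper's is shorter because the point--plane identity does the bookkeeping for free.
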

\begin{proof}
	We consider the affine space AG($3,q$), together with the corresponding projective space PG($3,q$). We also define $\pi_\infty$ as the plane at infinity.
	Consider a CL line class $\mathcal{L}$ of parameter $x$ in AG($3,q$).
	Then, by Theorem \ref{AffCLBasic}, we know that $\mathcal{L}$ defines a CL line class in PG($3,q$) with the same parameter $x$. Here we can use Theorem \ref{ProjEqLines} to obtain that  for every point $p'$ and plane $\pi$, with $p' \in \pi$, we get that
	\begin{equation}\label{NoBetter1}
| star(p')\cap \mathcal{L}| + |line(\pi)\cap\mathcal{L}|=x +(q+1)|pencil(p',\pi)\cap \mathcal{L}|.
\end{equation}
	If we now choose $\pi$ as an arbitrary affine plane and $p'$ as a point at infinity. Then we get that $| star(p')\cap \mathcal{L}|=x$, thus we obtain that $|line(\pi)\cap\mathcal{L}|\equiv 0 \mod{(q+1)}$.

Secondly, pick an arbitrary point $p$ and choose an arbitrary plane $\pi'$ containing this point. Then we can consider Equation (\ref{NoBetter1}) modulo $q+1$ and obtain with our previously proven fact that 
$$|star(p)\cap \mathcal{L}|+0\equiv x+0  \mod{(q+1)}.$$
This proves the lemma.
\end{proof}
This lemma proves that choosing another plane or point in Theorem \ref{MetschRes} will not improve Corollary \ref{GevKlausMain}, since this would lead to the same equation by filling in $n=0$ or $n=x$. 

We are ready to give a classification of the CL line classes with small parameter $x$.
\begin{Gev}\label{SmallClas}
Consider the space AG($3,q$).
\begin{itemize}
	\item The only CL line class of parameter $x=0$ is the empty set.
	\item The only CL line class  of parameter $x=1$ is a point-pencil defined by an affine point.
	\item There are no CL line classes of parameter $x=2$.
\end{itemize}
\end{Gev}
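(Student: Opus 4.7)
The plan is to reduce to the already-known projective classification. By Theorem \ref{AffCLBasic}, every CL line class $\mathcal{L}$ in AG($3,q$) of parameter $x$ is automatically a CL line class in PG($3,q$) of the same parameter, with the extra constraint that $\mathcal{L}$ contains no line of $\pi_\infty$. For $x\in\{0,1,2\}$, Theorem \ref{TrivExSt} then forces $\mathcal{L}$ to be one of the four examples in Example \ref{TrivEx}, so it suffices to inspect, case by case, which of these examples can avoid all lines at infinity.

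For $x=0$, Lemma \ref{size} yields $|\mathcal{L}|=0$, so $\mathcal{L}=\emptyset$, which is evidently a CL line class. For $x=1$, the projective classification leaves two candidates: a point-pencil $star(p)$ or a plane-pencil $line(\pi)$. I would rule out the plane-pencil by observing that every plane $\pi$ of PG($3,q$) contains at least one line of $\pi_\infty$: either the whole of $\pi_\infty$ if $\pi=\pi_\infty$, or the single line $\pi\cap\pi_\infty$ if $\pi\neq\pi_\infty$. For the point-pencil, $star(p)$ avoids $\pi_\infty$ precisely when $p$ is affine; indeed, if $p\in\pi_\infty$, then the $q+1$ lines through $p$ inside $\pi_\infty$ would belong to $\mathcal{L}$, while for affine $p$ no line through $p$ lies in $\pi_\infty$. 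Conversely, for any affine point $p$, Theorem \ref{CLAf3} confirms that $star(p)$ does define a CL line class of AG($3,q$), matching the size $q^2+q+1$ from Lemma \ref{size}.

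For $x=2$, the cleanest route is to invoke Corollary \ref{GevKlausMain} directly: substituting $x=2$ into the congruence $x(x-1)/2\equiv 0\pmod{q+1}$ gives $1\equiv 0\pmod{q+1}$, which fails for every prime power $q$. Hence no CL line class of parameter $2$ can exist in AG($3,q$). (As a sanity check, one could also observe that the only projective candidate $star(p)\cup line(\pi)$ with $p\notin\pi$ would, by the argument above, always carry at least the line $\pi\cap\pi_\infty$, so the direct route agrees.)

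The main obstacle is really just careful bookkeeping in the $x=1$ case, verifying that the plane-pencil example cannot avoid $\pi_\infty$ regardless of the choice of $\pi$. The $x=2$ case is essentially free once Corollary \ref{GevKlausMain} is available, which is why no case analysis on the positions of $p$ and $\pi$ is needed there.
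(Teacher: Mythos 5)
Your proof is correct and follows essentially the same route as the paper: reduce to the projective classification via Theorems \ref{AffCLBasic} and \ref{TrivExSt} for $x\in\{0,1\}$, and invoke Corollary \ref{GevKlausMain} for $x=2$. The only cosmetic difference is that you exclude the plane-pencil candidate for $x=1$ by noting that every plane of PG($3,q$) contains a line of $\pi_\infty$, whereas the paper excludes it by checking the constant-intersection property against a line spread of type II; both observations are immediate.
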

\begin{proof}
The case for $x=0$ is trivial.
Suppose that $\mathcal{L}$ defines a CL line class of parameter $x$ in AG($3,q$). Then we know due to Theorem \ref{AffCLBasic} that $\mathcal{L}$ also defines a CL line class in PG($3,q$) of the same parameter $x$. So for $x=1$ we know that, due to Theorem \ref{TrivExSt}, $\mathcal{L}$ consists of all lines through a point or the set of all the lines in a plane. Since it is immediately clear that all the lines in a plane does not satisfy the constant intersection axiom with line spreads of type II, we may conclude that the only example for $x=1$ left is the set of all lines through an affine point.
	
	The case $x=2$ follows trivially from Corollary \ref{GevKlausMain}, since it is clear that $x=2$ does never satisfy $\frac{x(x-1)}{2}\equiv 0 \pmod{(q+1)}$.
\end{proof}
\begin{Opm}\label{HogeParameters}
	As a consequence of this Corollary and Lemma \ref{PropCLLC}, we know that in AG($3,q$) there do not exist CL line classes of parameter $x=q^2-2$. Another consequence is that the only possibility for $x=q^2-1$ or $x=q^2$ consists of the complement of all the lines through an affine point or all the lines in AG($3,q$) respectively.
\end{Opm}

\section{Number of possible parameters $x$ for CL line classes in AG($3,q$)}
Consider again Equation (\ref{CL1}) which is equivalent to
\begin{equation}\label{CL2}
x(x-1)\equiv 0 \mod 2(q+1).
\end{equation}
So we conclude, by Corollary \ref{GevKlausMain}, that if a CL line class \(\mathcal{L}\) with parameter \(x \in \{0,1,...,q^2\}\) exists in AG($3,q$), then Equation (\ref{CL2}) holds for \(x\).
Now if we consider the prime factorization \(2(q+1)=p_1^{h_1} \cdots  p_s^{h_s}\), then we can take a look at the system of equations
\begin{equation}\label{CL3}
x(x-1)\equiv 0 \mod p_i^{h_i}, \text{   for } i \in \{1,...,s\}.
\end{equation}
We know that if we find a value $x$ that satisfies this system of equations, we get, due to the fact that all the prime powers are coprime, a solution for (\ref{CL2}). In fact the Chinese Remainder Theorem states that this solution will be unique modulo $2(q+1)$.

\begin{Le}\label{Claim}
	Let $\mathcal{L}$ be a CL line class with parameter $x$ in AG($3,q$), where the prime factorization of $2(q+1)$ is equal to $p_1^{h_1} \cdots  p_s^{h_s}$. Then 
 \begin{equation}\label{CL4}
	x\equiv 0 \pmod{ p_i^{h_i}} \text{ or } x\equiv 1 \pmod{p_i^{h_i}}, \text{   for } i \in \{1,...,s\}.
	\end{equation}
\end{Le}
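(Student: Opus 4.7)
The plan is to reduce the statement to Corollary \ref{GevKlausMain} and then exploit the elementary fact that consecutive integers are coprime.

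First I would invoke Corollary \ref{GevKlausMain}, which gives $\frac{x(x-1)}{2} \equiv 0 \pmod{q+1}$, equivalently $x(x-1) \equiv 0 \pmod{2(q+1)}$. Substituting the prime factorization $2(q+1) = p_1^{h_1}\cdots p_s^{h_s}$, this yields
\[
p_i^{h_i} \mid x(x-1), \qquad i \in \{1,\ldots,s\}.
\]

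Next, for each fixed $i$, I would use the fact that $\gcd(x,x-1)=1$, so that the prime $p_i$ cannot divide both $x$ and $x-1$. Since $p_i \mid x(x-1)$, the prime $p_i$ divides exactly one of the two factors. Say $p_i \mid x$ (the other case is symmetric). Then $\gcd(p_i, x-1)=1$, hence $\gcd(p_i^{h_i}, x-1)=1$. Combined with $p_i^{h_i} \mid x(x-1)$, this forces $p_i^{h_i} \mid x$, i.e.\ $x \equiv 0 \pmod{p_i^{h_i}}$. Symmetrically, if $p_i \mid (x-1)$, then $x \equiv 1 \pmod{p_i^{h_i}}$. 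This gives the claimed dichotomy for each $i$.

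There is no serious obstacle here; the result is essentially a book-keeping consequence of Corollary \ref{GevKlausMain} together with coprimality of consecutive integers. The only subtlety worth mentioning is the prime $p_i = 2$, which always appears in $2(q+1)$: for $q$ odd we have $h_1 \geq 2$, while for $q$ even we have $h_1 = 1$. In either case exactly one of $x$, $x-1$ is even, so the same argument applies without modification and the full power $2^{h_1}$ must divide whichever of $x$ or $x-1$ is even.
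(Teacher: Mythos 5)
Your proof is correct and is essentially the argument the paper intends: the paper's own proof just says the lemma "follows from Equations (\ref{CL2}) and (\ref{CL3})", leaving implicit exactly the coprimality-of-consecutive-integers step that you spell out. Your write-up is simply a more detailed version of the same reduction.
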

\begin{proof}
This follows from Equations (\ref{CL2}) and (\ref{CL3}).
\end{proof}

This lemma will enable us to find an upper bound for the number of parameters $x$ of possible CL line classes in AG($3,q$).
\begin{St}\label{theo1}
Let $x$ be the parameter of a CL line class $\mathcal{L}$ in AG($3,q$), so $x \leq q^2$. Consider now the prime factorization \(2(q+1)=p_1^{h_1}\cdots p_s^{h_s}\). Then there are at most
	
	\begin{equation}
	\left\{
	\begin{array}{ll}
	2^{s-1}q, & \text{if } q \text{ is even}\\
	2^{s-1}q-2^{s-1}+2, & \text{if } q \text{ is odd}
	\end{array} \right. ,
	\end{equation}
	possibilities for \(x\).
\end{St}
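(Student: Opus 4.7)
My plan is to combine Lemma \ref{Claim} with the Chinese Remainder Theorem to reduce the statement to a counting problem modulo $2(q+1)$. Lemma \ref{Claim} says $x\equiv 0$ or $1\pmod{p_i^{h_i}}$ for every $i$; since the prime powers $p_1^{h_1},\ldots,p_s^{h_s}$ are pairwise coprime, CRT guarantees that each of the $2^s$ possible combinations of these residue choices corresponds to a single residue class modulo $2(q+1)$. So the task becomes: count how many integers in $\{0,1,\ldots,q^2\}$ lie in a prescribed set of $2^s$ residue classes modulo $2(q+1)$.

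The key arithmetic identity I would use is $q^2-1=(q-1)(q+1)$. When $q$ is odd, $q-1$ is even, so $q^2-1=\tfrac{q-1}{2}\cdot 2(q+1)$ is a multiple of $2(q+1)$. The interval $\{0,\ldots,q^2\}$ then consists of $\tfrac{q-1}{2}$ complete periods of length $2(q+1)$ covering $\{0,\ldots,q^2-2\}$, followed by the two leftover values $q^2-1$ and $q^2$, whose residues modulo $2(q+1)$ are $0$ and $1$ respectively. Thus residues $0$ and $1$ each contain $\tfrac{q+1}{2}$ elements of the interval while every other residue contains only $\tfrac{q-1}{2}$. The residues $0$ and $1$ are always admissible (they come from choosing the same value for every prime power), so the total count is exactly
\[
2\cdot\tfrac{q+1}{2}+(2^s-2)\cdot\tfrac{q-1}{2}=2^{s-1}q-2^{s-1}+2,
\]
matching the bound claimed for odd $q$.

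For $q$ even, $q+1$ is odd, so $q^2-1$ is no longer divisible by $2(q+1)$ and the distribution across residues is more uniform. Writing $q^2+1=2(q+1)\cdot\tfrac{q-2}{2}+(q+3)$, one checks that every residue class modulo $2(q+1)$ contains at most $\lceil(q^2+1)/(2(q+1))\rceil=q/2$ elements of $\{0,\ldots,q^2\}$; summing over the $2^s$ admissible classes immediately yields the bound $2^s\cdot\tfrac{q}{2}=2^{s-1}q$. The only real subtlety is making sure that residues $0$ and $1$ are genuinely the two extremal ones in the odd case, which follows directly from the displayed identity since the leftover values after the $\tfrac{q-1}{2}$ complete periods are precisely $q^2-1\equiv 0$ and $q^2\equiv 1$; otherwise the argument is a clean application of CRT together with a standard count for arithmetic progressions.
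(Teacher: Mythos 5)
Your proof is correct and follows essentially the same route as the paper: Lemma \ref{Claim} together with the Chinese Remainder Theorem reduces the problem to counting the integers of $\{0,\dots,q^2\}$ lying in $2^s$ prescribed residue classes modulo $2(q+1)$, and the complete-periods-plus-remainder count yields both bounds. The only (harmless) difference is that you dispose of the two leftover values $q^2-1\equiv 0$ and $q^2\equiv 1$ in the odd case by pure arithmetic, whereas the paper appeals to Remark \ref{HogeParameters} for them.
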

\begin{proof}
	If $\mathcal{L}$ is a CL line class with parameter $x$ in AG($3,q$), then, by Corollary \ref{GevKlausMain}, it follows that Equation (\ref{CL2}) holds. So to count the maximal number of possible parameters, we need to count the maximal number of solutions for (\ref{CL2}).
	
	Due to our previous observations about the Chinese Remainder Theorem and Lemma \ref{Claim}, we only need to count the number of possible solutions for the equations

	\[x\equiv 0,1 \text{        mod}\,\, p_i^{h_i},\]
	for every \(i \in \{1,...,s\}\). If we now pick in every equation a \(1\) or \(0\), then we know that there is a unique solution for
	\[x(x-1)\equiv 0 \mod 2(q+1).\]
	Note that there are \(2^s\) possibilities to pick such a solution. But remark that these  solutions are considered in the interval \(I=[0, 2(q+1)-1]\) and adding $2(q+1)$ to a solution gives a new solution. So one can ask how many times the interval $I$ fits inside the interval \([0,q^2]\). This  is equal to the number \(\bigg\lfloor\frac{q^2}{2(q+1)}\bigg\rfloor\).
	\begin{enumerate}
		\item For \(q\equiv 1 \pmod 2\):
		\[\frac{q^2}{2(q+1)}=\frac{q-1}{2}+\frac{1}{2(q+1)},\]
		where \(\frac{q-1}{2} \in \mathbb{N}\), since \(q\) is odd. This gives  that \(I\) fits $\frac{q-1}{2}$ times inside \([0,q^2]\). So in each of the following intervals, there is precisely one solution for every choice we made before
		\[[0,2(q+1)-1],[2(q+1),4(q+1)-1],...,\bigg[\left( \frac{q-1}{2}-1\right) 2(q+1),\left( \frac{q-1}{2}\right) 2(q+1)-1\bigg],\]
		where the last interval in this row can be simplified as follows
		
		\[\bigg[\left( \frac{q-1}{2}-1\right) 2(q+1),\left( \frac{q-1}{2}\right) 2(q+1)-1\bigg]=[q^2-2q-3,q^2-2].\]
		This all gives at most \(2^s\big(\frac{q-1}{2}\big)\) solutions in the first $\frac{q-1}{2}$ intervals.
		So now we only need to add the solutions for \(q^2-2< x\leq q^2\). For \(x=q^2-1\), there is, by Remark \ref{HogeParameters}, only one CL line class: the complement of all lines through an affine point. If $x=q^2$, we see  that the only possibility is to consider every line in AG(3,$q$). So we get in total
		\[2^s\bigg(\frac{q-1}{2}\bigg)+2=2^{s-1}q-2^{s-1}+2\]
		solutions for \(x \in [0,q^2]\).
		
		\item For \(q\equiv0 \pmod 2\), we get:
		\[\frac{q^2}{2(q+1)}=\frac{q-2}{2} +\frac{q+2}{2(q+1)}.\]
		Now  \(\frac{q-2}{2} \in \mathbb{N}\), since \(q\) is  even. So we get for all \(x\) in one of these intervals at most \(2^s\big(\frac{q-2}{2}\big)\) solutions. One can calculate that the last interval that fits inside $[0,q^2]$ is of the form \([q^2-3q-4,q^2-q-3]\). So, for \(q^2-q-3<x\leq q^2\), we can only estimate that there are at most \(2^s\) solutions. We get at most 
		\[2^s\bigg(\frac{q-2}{2}\bigg)+2^s=2^{s-1}q\]
		solutions for \(x \in [0,q^2]\).
		
	\end{enumerate}

\end{proof}
\section{Classification of CL line classes in AG($3,q$), $q\leq 4$}
In this section we want to classify all CL line classes in AG($3,q$), with $q\leq 4$. To do this, we will mainly use Corollary \ref{GevKlausMain}. But first we give some trivial examples of CL line classes in AG($3,q$).
\begin{Def}\label{TrivAGEX}
Consider AG($3,q$) and let us recall the CL line classes from Corollary \ref{SmallClas} together with their complements. As a consequence of this corollary and Remark \ref{HogeParameters}, these examples are in fact the only possible examples for their corresponding parameter $x\in \{0,1,q^2-1,q^2\}$. We call these examples of CL line classes \emph{trivial examples}.
\end{Def}

\begin{St}
The only CL line classes in AG($3,q$), with $q\in\{2,3\}$, are trivial.
\end{St}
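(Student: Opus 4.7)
The plan is to invoke Corollary \ref{GevKlausMain} (equivalently Lemma \ref{Claim}) as the sole engine and show that for $q\in\{2,3\}$ the divisibility constraint $\binom{x}{2}\equiv 0\pmod{q+1}$ already forces $x\in\{0,1,q^2-1,q^2\}$. The existence and uniqueness of the CL line classes for each of these four values is then exactly what Corollary \ref{SmallClas} together with Remark \ref{HogeParameters} gives us, so nothing more is needed beyond a short case check.

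Concretely, first I would treat $q=2$. Here $2(q+1)=6=2\cdot 3$, so by Lemma \ref{Claim} any admissible parameter $x$ must satisfy $x\equiv 0$ or $1\pmod{2}$ (automatic) and $x\equiv 0$ or $1\pmod{3}$. Combined with $0\leq x\leq q^2=4$ from Lemma \ref{PropCLLC}(1), the possibilities reduce to $x\in\{0,1,3,4\}$, all of which are trivial by Definition \ref{TrivAGEX}. Next I would do $q=3$: now $2(q+1)=8=2^3$, so Lemma \ref{Claim} forces $x\equiv 0$ or $1\pmod{8}$, and with $0\leq x\leq 9$ the only survivors are $x\in\{0,1,8,9\}$, again trivial. (If one prefers, the same list can be read directly off Corollary \ref{GevKlausMain} by checking the seven residues $x=2,\dots,7$ against $x(x-1)\equiv 0\pmod{8}$ and the two residues $x=2,3$ against $x(x-1)\equiv 0\pmod{6}$.)

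The actual identification of each surviving parameter with a trivial example is handled by results already in the paper: for $x=0$ and $x=1$ by Corollary \ref{SmallClas}, and for $x=q^2-1$ and $x=q^2$ by Remark \ref{HogeParameters} (via the complement operation of Lemma \ref{PropCLLC}(2)). There is no genuine obstacle here; the only thing to be a little careful about is that Corollary \ref{SmallClas} gives the classification of $x=1$ directly in AG($3,q$), ruling out the projective planar example via incompatibility with type II spreads, and Remark \ref{HogeParameters} is needed to exclude $x=q^2-1$ from being anything other than the complement of an affine point-pencil. With these observations the proof is a one-line application of the congruence followed by a reference to the previously established classification.
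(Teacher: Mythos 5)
Your proposal is correct and follows essentially the same route as the paper: apply Corollary \ref{GevKlausMain} together with the bound $0\leq x\leq q^2$ from Lemma \ref{PropCLLC}(1) to reduce to $x\in\{0,1,q^2-1,q^2\}$, then cite the classification of these parameters as trivial examples. Your version simply spells out the residue computations that the paper leaves implicit.
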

\begin{proof}
As a consequence of Corollary \ref{GevKlausMain} and Lemma \ref{PropCLLC} (1), we obtain that the remaining possibilities for parameters of CL line classes in AG($3,q$) in these cases are $x\in \{0,1, q^2-1,q^2\}$. But for all of these parameters there only exist trivial examples, which proves the theorem.
\end{proof}
Secondly, we observe the case for AG($3,4$). Therefore, we will need the following theorem.
\begin{St}\label{ParamPG(3,4)}
Consider the space AG($3,4$).
\begin{enumerate}
\item Then there exists no CL line class of parameter $x=5$.
\item If there exists a CL line class $\mathcal{L}$ of parameter $x=6$, then $\mathcal{L}$ intersects every plane in $3 \pmod{5}$ lines.
\end{enumerate}
\end{St}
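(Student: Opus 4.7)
The plan is to lift $\mathcal{L}$ to a CL line class in PG$(3,4)$ via Theorem~\ref{AffCLBasic}, then apply the Metsch--Gavrilyuk modular equation (Theorem~\ref{MetschRes}) together with the affine constraint Lemma~\ref{NoBetter}. Since $q+1=5$ and both $\binom{5}{2}=10$ and $\binom{6}{2}=15$ are divisible by $5$, the equation $\binom{x}{2}+n(n-x)\equiv 0\pmod 5$ collapses in both cases to a pure quadratic congruence on $n=|{\rm line}(\pi)\cap\mathcal{L}|$ (resp.\ $|{\rm star}(p)\cap\mathcal{L}|$), making both values $x=5,6$ accessible by local counting.

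For part (1), with $x=5$, Metsch--Gavrilyuk reduces to $n(n-5)\equiv 0\pmod 5$, forcing $n\equiv 0\pmod 5$ at every plane and every point of PG$(3,4)$, which already matches Lemma~\ref{NoBetter}. To derive a contradiction I would exploit the finer parallel-class structure implicit in the proof of Lemma~\ref{NoBetter}: within each affine plane $\pi$ the $n(\pi)$ lines of $\mathcal{L}$ split as $n(\pi)/5$ lines in each of the five parallel classes of $\pi$. Summing $n(\pi)$ over the four affine planes of the fan through any fixed line $\ell_\infty\subset\pi_\infty$ must equal $5x=25$, since each of the five points of $\ell_\infty$ is a direction carrying $x=5$ lines of $\mathcal{L}$, each lying in a unique such plane. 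Requiring this equality to hold simultaneously for every line of $\pi_\infty$, combined with $n(\pi)\equiv 0\pmod 5$ and the global total $|\mathcal{L}|=105$, should yield an overdetermined integer system with no valid solution.

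For part (2), with $x=6$, Metsch--Gavrilyuk reduces to $n(n-6)\equiv 0\pmod 5$, giving only $n\equiv 0$ or $1\pmod 5$. To upgrade this to the stated $n\equiv 3\pmod 5$, I would apply the Cameron--Liebler relation of Theorem~\ref{ProjEqLines}(5) at every point $p$ of an affine plane $\pi$ to express $|{\rm pencil}(p,\pi)\cap\mathcal{L}|$ in terms of $|{\rm star}(p)\cap\mathcal{L}|$, $n(\pi)$ and $x$, then form a weighted quadratic sum over the $q^2+q+1=21$ points of $\pi$ and reduce it modulo $5$ using the residues $|{\rm star}(p)\cap\mathcal{L}|\equiv x\equiv 1\pmod 5$ supplied by Lemma~\ref{NoBetter}. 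A judicious choice of weighting should collapse the result into a single-residue congruence on $n(\pi)$ modulo $5$, namely the stated value $3$.

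The main obstacle is locating this refined quadratic identity in part (2): the coarse Metsch--Gavrilyuk bound only narrows $n\pmod 5$ to the pair $\{0,1\}$, so the extra shift that picks out exactly $3\pmod 5$ must emerge from the affine parallelism encoded in Lemma~\ref{NoBetter}, a piece of information that is specific to AG$(3,q)$ and has no direct counterpart in the purely projective setting.
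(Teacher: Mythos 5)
There is a genuine gap: your proposal does not actually prove either part, and it misses that the paper's own proof is simply a citation. The paper proves this theorem by quoting \cite[Theorem 1.3]{GovPent}, which establishes both statements for PG($3,4$), and then transferring them to AG($3,4$) via Theorem~\ref{AffCLBasic}; no new counting is done. Your plan instead tries to rederive the PG($3,4$) results from the Metsch--Gavrilyuk congruence, and this does not go through. For part (1), your fan count is correct but toothless: fixing $\ell_\infty\subset\pi_\infty$, the four affine planes through it satisfy $\sum_{i=1}^{4}n(\pi_i)=25$ with each $n(\pi_i)\equiv 0\pmod 5$, which has many solutions (e.g.\ $5+5+5+10$), and the global total $|\mathcal{L}|=105$ adds nothing new. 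The non-existence of $x=5$ in PG($3,4$) is a substantive theorem requiring a detailed analysis of PG($3,4$); it does not follow from these congruences, and "should yield an overdetermined system with no valid solution" is not a proof.

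Part (2) is worse than incomplete. For $x=6$, $q=4$, Theorem~\ref{MetschRes} gives $n(n-1)\equiv 0\pmod 5$, i.e.\ $n\equiv 0$ or $1\pmod 5$ --- not $3$. Every identity you propose to combine (Theorem~\ref{ProjEqLines}(5), Lemma~\ref{NoBetter}, the Metsch--Gavrilyuk relation) is a formal consequence of the Cameron--Liebler axioms and is therefore mutually consistent; no weighted sum of them can output the residue $3$, because together with Lemma~\ref{NoBetter}'s $n\equiv 0\pmod 5$ that would be an outright contradiction, i.e.\ a full non-existence proof for $x=6$. That is precisely why the paper needs the external input of Govaerts and Penttila: their "$3\pmod 5$" is obtained by a deeper, PG($3,4$)-specific argument, and only its clash with Lemma~\ref{NoBetter} (exploited in the \emph{next} theorem of the paper) kills $x=6$. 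In short, both parts of your argument would need to be replaced either by the citation the paper uses or by a genuinely new self-contained non-existence proof, which your sketch does not supply.
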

\begin{proof}
This theorem is proven for PG($3,4$) in \cite[Theorem 1.3]{GovPent}, so the case for AG($3,4$) follows from Theorem \ref{AffCLBasic}. Since every CL line class in AG($3,4$) is a CL line class in PG($3,4$), it follows that it satisfies the same conditions as a CL line class in PG($3,4$).
\end{proof}
This leads to the following result.
\begin{St}
The only CL line classes in AG($3,4$) are trivial.
\end{St}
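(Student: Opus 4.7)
My plan is to enumerate the values of $x$ that survive the divisibility condition of Corollary \ref{GevKlausMain}, then kill the remaining non-trivial candidates one by one using the results already stated, most importantly Theorem \ref{ParamPG(3,4)} applied to the plane at infinity.

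First, I would specialise Corollary \ref{GevKlausMain} to $q=4$: every parameter $x$ of a CL line class in AG($3,4$) satisfies $x(x-1)\equiv 0\pmod{10}$, with $0\le x\le 16$ by Lemma \ref{PropCLLC} (1). A quick check of $x\in\{0,1,\dots,16\}$ shows that the only feasible values are $x\in\{0,1,5,6,10,11,15,16\}$. The four values $x\in\{0,1,15,16\}$ correspond precisely to the trivial examples by Definition \ref{TrivAGEX} (using Corollary \ref{SmallClas} and Remark \ref{HogeParameters}), so I only need to exclude $x\in\{5,6,10,11\}$.

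Next I would handle $x=5$ and $x=11$ simultaneously. By Theorem \ref{AffCLBasic}, a CL line class of parameter $x=5$ in AG($3,4$) would also be a CL line class of parameter $5$ in PG($3,4$), which is excluded by Theorem \ref{ParamPG(3,4)} (1). The case $x=11$ then follows by taking complements via Lemma \ref{PropCLLC} (2), since $16-5=11$.

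The key step, and the one that finishes the proof, is the case $x=6$ (and then $x=10$ by complement). Here I would invoke Theorem \ref{ParamPG(3,4)} (2): any CL line class of parameter $6$ in the corresponding projective space PG($3,4$) must meet every plane in a number of lines congruent to $3\pmod 5$. But since $\mathcal{L}\subseteq\text{AG}(3,4)$ contains no lines of $\pi_\infty$, the plane at infinity contributes $|line(\pi_\infty)\cap\mathcal{L}|=0$, and $0\not\equiv 3\pmod 5$, a contradiction. The complementary case $x=10=16-6$ is then ruled out by Lemma \ref{PropCLLC} (2), completing the classification. I expect no real obstacle in executing this plan; the only subtle point is remembering that the plane-at-infinity constraint turns the mild projective restriction of Theorem \ref{ParamPG(3,4)} (2) into an outright non-existence statement in the affine setting.
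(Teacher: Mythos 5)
Your proposal is correct and follows essentially the same route as the paper: reduce to $x\in\{0,1,5,6,10,11,15,16\}$ via Corollary \ref{GevKlausMain}, kill $x=5$ by Theorem \ref{ParamPG(3,4)} (1), kill $x=6$ by the $3\pmod 5$ plane condition, and dispose of $10$ and $11$ by complementation. The only (harmless) variation is that for $x=6$ you apply the Govaerts--Penttila plane condition directly to $\pi_\infty$ in PG($3,4$), whereas the paper combines it with Lemma \ref{NoBetter} on an affine plane; both yield the same contradiction $0\not\equiv 3\pmod 5$, and your candidate list correctly includes $x=10$, which the paper's displayed list omits (though it is excluded there anyway via the complement of $x=6$).
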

\begin{proof}
Again we start with Corollary \ref{GevKlausMain} and Lemma \ref{PropCLLC} (1), which implies that for the parameter $x$ of a CL line class it holds that $x\in \{0,1,5,6,11,15,16\}$. But due to Theorem \ref{ParamPG(3,4)}, we can exclude $x=5$. Using this same theorem in combination with Lemma \ref{NoBetter}, we find that for a CL line class of parameter $x=6$ and an arbitrary plane $\pi$, it holds that
$$|\mathcal{L}\cap line(\pi)|\equiv 0\not\equiv 3 \mod 5.$$
Hence, the parameter $x=6$ is excluded. From Lemma \ref{PropCLLC} (2), it would then follow that the parameters $x\in\{q^2-6=10, q^2-5=11\}$ are excluded as well. The only remaining parameters are those of the trivial examples and we may conclude the assertion.
\end{proof}

\section{A non-trivial example and a consequence for AG($3,5$)}
Here we give a non-trivial example of a CL line class in AG($3,q$). We will use the example of De Beule, Demeyer, Metsch and Rodgers stated in \cite{DeBeule} and the example of Feng, Momihara and Xiang stated in \cite{Feng}. Both articles simultaneously found a CL line class with parameter $x=\frac{q^2-1}{2}$ in PG($3,q$) that is skew to the set of all lines in a plane $\pi$ (for $q \equiv 5,9 \pmod{12}$).  
\begin{St}\cite[Theorem 5.1 and Lemma 6.1]{DeBeule}
There exists a CL line class of parameter $x=\frac{q^2-1}{2}$ in PG($3,q$), for $q \equiv 5 \text{ or }9 \pmod{12}$, which is skew to the set of  lines in  a plane.
\end{St}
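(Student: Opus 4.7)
The plan is to construct the example explicitly and verify its defining property by character-sum techniques, passing through the Klein correspondence. Under this correspondence the lines of PG($3,q$) are the points of the Klein quadric $Q^+(5,q)\subseteq$ PG($5,q$), the line spreads of PG($3,q$) are the ovoids of $Q^+(5,q)$, and the set of all lines of a fixed plane of PG($3,q$) is a generator (maximal subspace) $\Lambda$ of $Q^+(5,q)$ belonging to one of the two rulings. A CL line class of parameter $x$ thus becomes an $x$-tight set of $Q^+(5,q)$, and the condition ``skew to the set of lines of a plane'' becomes ``disjoint from $\Lambda$''. Fixing $\Lambda$ together with a generator of the opposite ruling meeting it in a point, one can slice $Q^+(5,q)\setminus\Lambda$ by the $(q+1)$-secant fibration through $\Lambda$ and reduce the problem to producing a tight set with a shifted parameter inside an embedded $Q(4,q)$.

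First I would coordinatise $Q(4,q)$ so that $\mathbb{F}_{q^2}^*$ acts semiregularly on a dense open part of it (using an anisotropic $\mathbb{F}_{q^2}$-line together with a hyperbolic pair), and take the candidate $\mathcal{L}$ to be the preimage of a union of three of the six cosets of a subgroup $H\leq \mathbb{F}_{q^2}^*$ of index $6$. Such a subgroup $H$ exists because $q\equiv 5$ or $9\pmod{12}$ forces $6\mid q+1$; the exact residue condition will also be used to control the interaction of $H$ with the Frobenius and with the sign character. An elementary orbit count fixes the size of $\mathcal{L}$ at $\frac{q^2-1}{2}(q^2+q+1)$, which is exactly the size required of a CL line class of parameter $\frac{q^2-1}{2}$, and skewness with $\Lambda$ is automatic from the construction since the cosets are chosen inside the residual quadric.

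The heart of the argument is the verification that $|\mathcal{L}\cap\mathcal{S}|$ is constant as $\mathcal{S}$ ranges over all line spreads. By item (4) of Theorem \ref{ProjEqLines}, it is equivalent to show that for every line $\ell$ of PG($3,q$) the number of elements of $\mathcal{L}$ disjoint from $\ell$ equals $(x-\chi_{\mathcal{L}}(\ell))q^2$. Expanding the indicator function of the chosen union of three cosets as a linear combination of the six multiplicative characters of order dividing $6$ on $\mathbb{F}_{q^2}^*$, this count reduces to an evaluation of Gauss sums of sextic characters over $\mathbb{F}_{q^2}$. The main obstacle, and the precise reason for the hypothesis $q\equiv 5, 9\pmod{12}$, is exactly this evaluation: these are the semi-primitivity congruences that force the sextic Gauss sums over $\mathbb{F}_{q^2}$ to take closed rational values (obtainable via the Davenport--Hasse lifting), and only then does the disjointness count collapse to the required constant $\frac{q^2-1}{2}$. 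Any approach that avoids such closed-form character evaluations seems unlikely to succeed, since the exceptional behaviour of the sextic characters precisely under these congruences appears to be an essential feature of the construction.
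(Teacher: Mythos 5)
This statement is not proved in the paper at all: it is quoted verbatim from De Beule--Demeyer--Metsch--Rodgers \cite{DeBeule} (and, independently, Feng--Momihara--Xiang \cite{Feng}), so a ``proof'' here would have to reproduce the substance of those papers. Your proposal does not do this; it is a plan whose decisive step is asserted rather than carried out. The entire content of the theorem is the verification that the proposed union of cosets is a tight set (equivalently, that the disjointness count of item (4) of Theorem \ref{ProjEqLines} collapses to $(x-\chi_{\mathcal{L}}(\ell))q^2$), and you only announce that this ``reduces to an evaluation of Gauss sums'' which ``take closed rational values''. Nothing is computed, so nothing is proved.

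Worse, the construction you describe cannot exist for half of the admissible $q$. You claim that $q\equiv 5$ or $9\pmod{12}$ forces $6\mid q+1$ and hence gives an index-$6$ subgroup $H\leq\mathbb{F}_{q^2}^*$. For $q\equiv 5\pmod{12}$ this is fine, but for $q\equiv 9\pmod{12}$ one has $q+1\equiv 10\pmod{12}$, so $6\nmid q+1$; moreover such $q$ are powers of $3$, so $3\nmid q^2-1$ and $\mathbb{F}_{q^2}^*$ has \emph{no} subgroup of index $6$ at all (already $q=9$ gives $|\mathbb{F}_{81}^*|=80$). Consequently the ``three of six cosets'' object you want to take preimages of does not exist in those cases, and the sextic characters of $\mathbb{F}_{q^2}^*$ whose Gauss sums you invoke do not exist either. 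The actual constructions in \cite{DeBeule} and \cite{Feng} are set up differently: after the Klein correspondence they take a union of $\frac{q^2-1}{2}$ orbits of a cyclic group of order $q^2+q+1$ acting on $Q^+(5,q)$ (an $\mathbb{F}_{q^3}$-model, not an $\mathbb{F}_{q^2}$-model), with tightness verified by counting/eigenvalue arguments in \cite{DeBeule} and by character-sum evaluations over $\mathbb{F}_{q^3}$ in \cite{Feng}; the congruences $q\equiv 5,9\pmod{12}$ enter there, not through divisibility of $q+1$ by $6$. So both the missing computation and the incorrect arithmetic are genuine gaps; the honest course in the context of this paper is simply to cite the result, as the authors do.
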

This example restricted to AG($3,q$), where we choose the plane at infinity as $\pi$, gives us by Theorem \ref{CLAf3} a CL line class in AG($3,q$) with the same parameter. So we can conclude the following corollary.
\begin{Gev}\label{DeBeuleVBGev}
	Consider the affine space AG($3,q$).  If \(q \equiv 5\) or \(9 \pmod {12}\), then there exists a CL line class with parameter \(x= \frac{q^2-1}{2}\).
\end{Gev}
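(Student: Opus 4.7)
The plan is a one-step reduction: invoke the cited existence theorem of De Beule--Demeyer--Metsch--Rodgers (and independently Feng--Momihara--Xiang) and then apply Theorem \ref{CLAf3} to transfer the resulting projective example into the affine space. So there is essentially no new combinatorial work required, only a careful identification of the ``missing'' plane with $\pi_\infty$.

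More concretely, first I would take the CL line class $\mathcal{L}$ in PG($3,q$) with parameter $x = \frac{q^2-1}{2}$ guaranteed by the immediately preceding theorem, under the hypothesis $q \equiv 5$ or $9 \pmod{12}$. The key property of that construction is that $\mathcal{L}$ is \emph{disjoint} from the set of all lines contained in some plane $\pi$ of PG($3,q$). Next I would fix a coordinatization of PG($3,q)$ in which this distinguished plane $\pi$ coincides with the hyperplane at infinity $\pi_\infty$ of AG($3,q$); equivalently, apply a collineation of PG($3,q$) sending $\pi$ to $\pi_\infty$ and replace $\mathcal{L}$ by its image (the image is again a CL line class of the same parameter, and is again skew to $\pi_\infty$). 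With this setup, $\mathcal{L}$ contains no line of $\pi_\infty$.

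Now the hypothesis of Theorem \ref{CLAf3} is met exactly: $\mathcal{L}$ is a CL line class in PG($3,q$) of parameter $x = \frac{q^2-1}{2}$ that is disjoint from the set of lines in the plane at infinity of AG($3,q$). The conclusion of that theorem is that the restriction of $\mathcal{L}$ to AG($3,q$) is a CL line class of AG($3,q$) with the same parameter. Since $\mathcal{L}$ had no lines at infinity to begin with, this restriction coincides with $\mathcal{L}$ itself, viewed as a set of affine lines. This yields the desired CL line class of parameter $x = \frac{q^2-1}{2}$ in AG($3,q$).

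There is really no main obstacle here; the corollary is a direct packaging of the cited projective construction together with Theorem \ref{CLAf3}. The only point deserving care is verifying that the projective example can be arranged to avoid $\pi_\infty$ specifically, which is handled by the freedom to apply a projectivity mapping the ``bad'' plane of the construction to $\pi_\infty$, and noting that being a CL line class of a given parameter is preserved by collineations.
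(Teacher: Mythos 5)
Your proposal is correct and matches the paper's argument exactly: the paper also takes the projective example skew to a plane $\pi$, identifies $\pi$ with $\pi_\infty$, and applies Theorem \ref{CLAf3} to obtain the affine CL line class of the same parameter. Your extra remark about using a collineation to move the distinguished plane to $\pi_\infty$ is a careful (and correct) elaboration of the paper's phrase ``where we choose the plane at infinity as $\pi$.''
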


\begin{Opm}
	This corollary, together with Lemma \ref{PropCLLC} (2), also gives that for \(q \equiv 5\) or \(9 \pmod {12}\), there exists a CL line class  with parameter \(x= \frac{q^2+1}{2}\) in AG($3,q$).
\end{Opm}

Now we want to give a complete characterization of the parameters for CL line classes in AG($3,5$). This result was based on a similar result for PG($3,5$) in \cite{MetschAndGavrilyuk}, where they used Theorem \ref{MetschRes}. In a similar way we will use Corollary \ref{GevKlausMain} to achieve this for AG($3,5$). Let us first state the result found in \cite{MetschAndGavrilyuk}.
\begin{St}\cite[Theorem 1.3]{MetschAndGavrilyuk}\label{q=5}
	A CL line class with parameter \(x\) exists in PG($3,5$) if and only if \(x \in \{0,1,2,10,12,13,14,16,24,25,26\}\).
\end{St}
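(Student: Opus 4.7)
The plan is to combine Theorem~\ref{MetschRes} with tighter global counting identities for the necessity direction, and to invoke known examples (together with an ad hoc construction) for the sufficiency direction.

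For the necessity, I would first specialise Theorem~\ref{MetschRes} to $q+1=6$: for every point and every plane the corresponding line count $n$ must satisfy $\binom{x}{2}+n(n-x)\equiv 0\pmod 6$. A direct tabulation over $x\in\{0,\ldots,26\}$ and $n\in\{0,\ldots,5\}$ shows that this congruence has \emph{no} admissible residue $n$ whatsoever when $x\in\{3,7,11,15,19,23\}$, so these six parameters are eliminated immediately.

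To rule out the remaining offenders $\{4,5,6,8,9,17,18,20,21,22\}$, the purely local condition is no longer sufficient. I would use Theorem~\ref{ProjEqLines}(1), i.e.\ $\chi_{\mathcal L}\in\mathrm{Im}(P_3^T)$, together with standard identities relating $|\mathcal L|$, $\sum_p n_p$, $\sum_p n_p^2$ and the analogous sums over planes, obtained from the known spectrum of $P_3P_3^T$ acting on the span of CL line classes. For each remaining $x$ the modular step of the previous paragraph already confines every $n_p$ and every $n_\pi$ to a very restricted residue class modulo $6$; comparing these restricted residue sets against the moment identities should then force an integer incompatibility in each case. This second-moment/tactical-decomposition step is, in my view, the main obstacle, and is precisely the heart of the Gavrilyuk--Metsch method.

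For the sufficiency, the parameters $0,1,2$ are realised by Example~\ref{TrivEx}, and $24,25,26$ follow by complementation in PG($3,5$). The Bruen--Drudge family gives $x=(q^2+1)/2=13$, while the construction of~\cite{DeBeule, Feng} gives $x=(q^2-1)/2=12$ (since $5\equiv 5\pmod{12}$); complementation then yields $x=14$. The only remaining pair, $x\in\{10,16\}$, needs its own example, and I would expect this to rely on an ad hoc or computer-assisted construction exploiting a symmetry subgroup of PG($3,5$); exhibiting such a pair is the secondary difficulty of the proof.
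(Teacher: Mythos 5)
A preliminary remark: the paper does not prove this statement at all. It is quoted, with attribution, from \cite[Theorem 1.3]{MetschAndGavrilyuk} and used as a black box in the subsequent corollary for AG($3,5$). So there is no internal proof to compare against; what you have written is a reconstruction of the Gavrilyuk--Metsch argument. In outline it does track their method, and the part you actually carry out is correct: the congruence $\binom{x}{2}+n(n-x)\equiv 0 \pmod 6$ of Theorem \ref{MetschRes} indeed has no admissible residue $n$ precisely for $x\in\{3,7,11,15,19,23\}$, and your inventory of constructions for the sufficiency direction ($0,1,2$ from Example \ref{TrivEx}, $13$ from Bruen--Drudge, $12$ from \cite{DeBeule,Feng}, and complements) is accurate.

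That said, as a proof the proposal has two genuine gaps, both of which you flag yourself but neither of which is routine. First, the elimination of $x\in\{4,5,6,8,9\}$ (and their complements $17,18,20,21,22$) is the actual content of the non-existence half of the theorem, and ``comparing restricted residue sets against the moment identities should then force an integer incompatibility'' is a hope, not an argument: for each such $x$ one must explicitly combine the admissible residues of the plane-degrees $n_\pi$ with the identities $\sum_\pi n_\pi=(q+1)|\mathcal{L}|$ and $\sum_\pi\binom{n_\pi}{2}$ (the latter computable from Theorem \ref{ProjEqLines}(4)), and in the literature some of these parameters also require earlier non-existence results for small $x>2$; nothing guarantees a priori that the mismatch materialises, so each case has to be exhibited. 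Second, the existence of a parameter-$10$ class in PG($3,5$) is not established by anything you cite; the known examples are sporadic (found by computer search), and without producing one the ``if'' direction fails for $x\in\{10,16\}$. As written, your text is a correct roadmap of the Gavrilyuk--Metsch proof rather than a proof; in the context of this paper the appropriate move is simply to cite \cite{MetschAndGavrilyuk}, as the authors do.
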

\begin{Gev}
There exists a CL line class $\mathcal{L}$ of parameter $x$ in AG($3,5$) if and only if $x \in \{0,1,12,13, 24,25\}$.
\end{Gev}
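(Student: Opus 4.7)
The plan is to establish necessity by progressively narrowing the candidate values of $x$ using Theorem \ref{AffCLBasic}, Theorem \ref{q=5}, Corollary \ref{GevKlausMain} and Lemma \ref{PropCLLC}, and then to establish sufficiency by pointing to an explicit example for each remaining parameter. First I would note that any CL line class in AG($3,5$) is, by Theorem \ref{AffCLBasic}, a CL line class of the same parameter in PG($3,5$), so Theorem \ref{q=5} already restricts $x$ to $\{0,1,2,10,12,13,14,16,24,25,26\}$. Lemma \ref{PropCLLC}(1) eliminates $x=26$ since $x\leq q^2=25$ in AG($3,5$). A direct check of the divisibility $\binom{x}{2}\equiv 0\pmod{6}$ coming from Corollary \ref{GevKlausMain} then discards $x\in\{2,10,14\}$, leaving the candidate set $\{0,1,12,13,16,24,25\}$.

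The core of the argument is the exclusion of $x=16$, which passes all the preceding filters. For this I would invoke Lemma \ref{PropCLLC}(2): the affine complement of a hypothetical CL line class of parameter $16$ would be a CL line class in AG($3,5$) of parameter $25-16=9$. Applying Theorem \ref{AffCLBasic} to this complement would produce a CL line class of parameter $9$ in PG($3,5$), contradicting Theorem \ref{q=5}. Hence $x=16$ is impossible and the candidate set shrinks to $\{0,1,12,13,24,25\}$. This is the step I expect to be the main obstacle in the sense that it is the only one not handled by the routine tools (modular arithmetic or trivial range bounds) and relies on the slightly non-obvious use of complementation to bounce the problem back into PG($3,5$), where the PG classification can be applied in its contrapositive form.

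For sufficiency, each remaining value admits an explicit construction: $x=0$ by the empty set and $x=25$ by taking all affine lines (both being trivial examples in the sense of Definition \ref{TrivAGEX}); $x=1$ by a point-pencil through an affine point (Corollary \ref{SmallClas}), and $x=24$ by the affine complement of such a point-pencil via Remark \ref{HogeParameters}; finally, since $5\equiv 5\pmod{12}$, Corollary \ref{DeBeuleVBGev} and the remark immediately following it yield CL line classes of parameters $(5^{2}-1)/2=12$ and $(5^{2}+1)/2=13$, respectively. Once these six examples are in place, the classification is complete.
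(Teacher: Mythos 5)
Your proposal is correct and follows essentially the same route as the paper: restrict via Theorem \ref{AffCLBasic} and Theorem \ref{q=5}, cut down with Corollary \ref{GevKlausMain}, eliminate $x=16$ by complementing to the impossible parameter $9$, and exhibit the same six examples. The only cosmetic difference is that you discard $x=26$ by the range bound of Lemma \ref{PropCLLC}(1), while the paper lets the modular condition absorb it; both work.
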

\begin{proof}
Due to Theorem \ref{AffCLBasic}, every CL line class in AG($3,5$) defines a CL line class in PG($3,5$) with the same parameter. Hence, if there exists no CL line class of parameter $x$ in PG\((3,5)\), then there exists no CL line class of parameter $x$ in AG($3,5$). 

From Theorem \ref{q=5}, it follows that \(x\in \{0,1,2,10,12,13,14,16,24,25,26\}\) are the only possible parameters for CL line classes in AG($3,5$). But if we consider Corollary \ref{GevKlausMain}, with $2(q+1)=4\cdot 3$, this then reduces to \(x \in \{0,1,12,13,16,24,25\}\). 
	Notice that if there would exist a CL line class of  parameter \(x=16\), then it follows from Lemma \ref{PropCLLC} that the complement of this line class is a CL line class of parameter $x=q^2-16=9$. This parameter does not occur in the list of remaining possible parameters, so we find a contradiction.

We only need to show that all these cases occur. We list the following examples:
\begin{itemize}
\item $x=0$: Put $\mathcal{L}=\emptyset.$
\item $x=1$: Let $\mathcal{L}$ be the set of all the lines through a fixed affine point.
\item $x=12$:  See Corollary \ref{DeBeuleVBGev}.
\item $x \in \{13,24,25\}$: Use Lemma \ref{PropCLLC}, which states that the complement of a CL line class is also a CL line class.
\end{itemize}


\end{proof}
\section{Final remarks}

In this last section, we want to compare our results to the result obtained by Penttila in \cite{PenttilaThesis}. In his PhD thesis, Penttila not only considered CL line classes in PG($3,q$), but also observed symmetrical tactical decompositions in PG($3,q$). Such a symmetrical tactical decomposition is a pair $(P,L)$, with $P$ and $L$ a specific partition of the points and lines of PG($3,q$) respectively. For more information, we refer to \cite{Cameron-Liebler}. Here it was proven that every line class of the partition $L$ defines a CL line class in PG($3,q$). A similar case follows for AG($3,q$). Besides this fact, Penttila also observed that if a symmetrical tactical decomposition of PG($3,q$) contains the hyperplane at infinity as a line and point class, then this symmetrical decomposition is also a symmetrical tactical decomposition in AG($3,q$). This result is comparable with Theorem \ref{CLAf3}. Conversely, every symmetrical tactical decomposition in AG($3,q$) can be extended to a symmetrical tactical decomposition in PG($3,q$). This fact is comparable with Theorem \ref{AffCLBasic}. Hence, these results are not exactly the same as Theorems \ref{CLAf3} and \ref{AffCLBasic}, since not every CL line class in PG($3,q$) (and possibly in AG($3,q$)) is a line class in a symmetrical tactical decomposition. Hence, it remains interesting to find such analogous results.
\paragraph{Acknowledgement}
The research of Jozefien D'haeseleer is supported by the FWO (Research Foundation Flanders).

The research of Andrea \v{S}vob is supported by the Croatian Science Foundation under the project 6732.

The authors thank the referees for their suggestions to improve this article.






\end{document}